\documentclass{article}[11pt]
\usepackage[utf8]{inputenc}
\usepackage{amsmath,amssymb, dsfont}
\usepackage{amsthm}
\usepackage{hyperref}
\usepackage{authblk}

\usepackage{t1enc}
\usepackage[english]{babel}
\usepackage{amssymb,latexsym,amsfonts}
\usepackage{amsmath,amssymb}
\usepackage{fancyhdr}
\usepackage{tikz-cd} 
\usepackage{url}
\usepackage{longtable,hhline}
\usepackage{graphicx}
\usepackage{stmaryrd}
\usepackage{graphics,epsfig,color}
\graphicspath{ {images/} }
\usepackage{wrapfig}
\usepackage{eurosym}

\usepackage[T1]{fontenc}

\usepackage{amsbsy}

\newtheorem{thm}{\textsc{Theorem}}
\newtheorem{theorem}{\textsc{Theorem}}
\newtheorem{pro}{\textsc{Proposition}}
\newtheorem{cor}{\textsc{Corollary}}
\newtheorem{lem}{\textsc{Lemma}}
\newtheorem{rmq}{\textsc{Remark}}
\newtheorem{defn}{\textsc{Definition}}
\newtheorem{exemple}{\textsc{Example}}
\newtheorem{notation}{\textsc{Notation}}

\newcommand{\fred}{\textcolor{green}}
\newcommand{\blue}{\textcolor{blue}}
\newcommand{\commentblue}[1]{\blue{(*)}\marginpar{\fbox{\parbox[l]{3cm}{\blue{#1}}}}}

\author{Fréd\'eric PROTIN}
\affil{Past \& Future}

\title{A Framework for Supervised and Unsupervised Learning via Reproducing Kernel Hilbert Spaces}

\author{Frederic Protin }

\begin{document}

\maketitle


\maketitle


\begin{abstract}
We introduce a unified framework for supervised and unsupervised learning through density estimation using reproducing kernel Hilbert spaces (RKHSs). To this end, we introduce the notion of a nested RKHS, that is, an increasing sequence of RKHSs whose union is dense in an ambient $L^2$-space. This construction allows us to exploit the computational structure of RKHS methods at each finite approximation level while preserving the approximation power of the entire $L^2$-space.

We first study statistical learning within a fixed RKHS. In both unsupervised and supervised settings, we establish strong convergence results for empirical averages of random elements taking values in an RKHS continuously embedded in $L^2$. We then leverage the nested RKHS structure to combine these statistical convergence results with the density of the union of the RKHSs in the ambient $L^2$-space. This framework is applied to probability density estimation and binary Bayesian classification. 

For density estimation, we construct explicit empirical estimators of the orthogonal projections of the target density onto the RKHSs and prove their almost sure convergence. A diagonal selection procedure then yields a sequence of estimators converging almost surely to the target density in $L^2$. The resulting estimators are expressed explicitly in terms of kernel sections evaluated at the observations, thereby avoiding the numerical minimization of empirical objective functions.

For binary Bayesian classification, we formulate the approximation of the Bayes target as the minimization of suitable population and empirical risk functionals. We prove the consistency of the empirical minimizers and leverage the nested RKHS structure to establish convergence towards the Bayes target in $L^2$. We further provide a geometric interpretation in terms of hyperplanes and weighted distances in the ambient $L^2$-space, linking our construction to kernel Support Vector Machines. Finally, we show that the population risk admits a direct probabilistic interpretation as the probability of misclassification. The resulting framework thus connects statistical learning, RKHS approximation, geometric classification, and Bayesian decision theory within a common nested RKHS setting.
\end{abstract}

\section{Introduction}

Estimating an unknown probability density from a random sample is a fundamental problem in statistics and machine learning. Classical nonparametric approaches, such as kernel density estimation, provide flexible estimators without imposing a parametric model on the underlying distribution. Reproducing kernel Hilbert spaces (RKHSs) provide a natural functional framework for nonparametric estimation, in which functions can be approximated by finite linear combinations of kernel sections. Kernel methods have consequently been widely used in density estimation, classification, and more generally in statistical learning \cite{sang2023, pereverzyev2022, scholkopf2002learning, vapnik1998statistical}.

RKHS-based approaches have also been considered specifically for probability density estimation. Vapnik and Mukherjee proposed a Support Vector Method for multivariate density estimation, in which density estimation is formulated as an inverse problem and solved using kernel-based methods \cite{vapnik1999support}. More recently, Kazashi and Nobile developed a density estimation method in an RKHS framework, with the density approximated by finite linear combinations of kernel sections, and established convergence results for the resulting estimators \cite{kazashi2023density}. RKHS-based density approximation has also been considered in the context of functional data. Saeidi, Aminghafari, and Ashkartizabi proposed Fdmclust, a model-based clustering method for functional data based on probability density approximation in an RKHS framework \cite{saeidi2025fdmclust}. Their approach uses projections onto a Mercer kernel and combines these projections with PCA or ICA to handle Gaussian and non-Gaussian functional data, respectively, with parameter estimation performed by an EM algorithm. These works illustrate the versatility of RKHS-based density approximation and the importance of the choice of the underlying function space and approximation scheme.

Existing RKHS-based density estimation methods generally work within a prescribed RKHS, whereas our approach considers an increasing family of RKHSs whose union is dense in the ambient $L^2$-space. This distinction is central to the framework developed in this paper. Rather than requiring a single RKHS to provide a sufficiently rich approximation space, we work with a sequence of RKHSs
\[
H_1\subset H_2\subset\cdots\subset L^2(\mathcal X,\beta)
\]
whose union is dense in $L^2(\mathcal X,\beta)$. We refer to such a family as a \emph{nested RKHS}. After introducing the notion of $L^2$-RKHS in Subsection \ref{sub:RKHS}, we introduce and study nested RKHSs in Subsection \ref{sub:Nested_RKHS}. We then define in Subsection \ref{sub:Distinguished_nested_RKHS} a particular class of nested RKHSs, naturally associated with orthonormal bases of continuous functions of the $L^2$-space under consideration. This construction provides, in particular, a natural way to establish the existence of nested RKHSs. At each level, the problem remains within an RKHS and therefore admits a kernel representation, while increasing the level allows the approximation to converge to arbitrary elements of the ambient $L^2$-space.

The statistical basis of our approach is developed in Section \ref{subsec:statistical}. We consider random elements taking values in an RKHS continuously embedded in $L^2(\mathcal X,\beta)$ and study their expectations and empirical averages. In Subsection \ref{sec:random}, we establish the relevant convergence result in the unsupervised setting, while Subsection \ref{sec:framework} treats the corresponding supervised setting. These results provide the statistical foundation for the constructions developed subsequently.

We then apply the nested RKHS framework to Bayesian machine learning in Section \ref{sec:bayesian}. Our objective is to extend the fixed-RKHS results of Section \ref{subsec:statistical} so as to recover the target functions $f_X$ and $f_m$, rather than only their projections onto a given function space. We first formulate the corresponding approximation problems as the minimization of suitable risk functionals in Subsection \ref{sub:risk}. For each approximation level, we study the consistency of the associated empirical minimization procedure. The nested structure is then used in Subsection \ref{sub:minL2} to combine the approximation and statistical limits through a diagonal selection procedure. This yields a sequence of empirical estimators converging almost surely to the target function in $L^2(\mathcal X,\beta)$. In particular, the construction applies to probability density estimation and to the estimation of the Bayes target in binary classification. An important feature is that the resulting empirical minimizers admit explicit expressions in terms of kernel sections evaluated at the observations, so that their computation does not require the numerical minimization of a potentially costly empirical objective function.

In the classification setting, we restrict our attention to binary classification, which contains the essential structure of the Bayesian decision problem considered here. The relevant target is the difference between the two class-conditional densities weighted by their prior probabilities, equivalently represented through the regression function associated with the binary response. The resulting classification rule is determined by the sign of this target function. The convergence results obtained in Subsection \ref{sub:minL2} therefore provide a consistent approximation of the Bayes target through functions belonging to the successive RKHSs.

The proposed construction also admits a geometric interpretation, developed in Subsection \ref{lift}. The canonical embedding of $\mathcal X$ into $H_n$ allows a decision boundary in $\mathcal X$ to be viewed as a hyperplane in $H_n$. The Bayes decision function and its RKHS approximations can consequently be interpreted as vectors in the ambient $L^2$-space, while the empirical cost can be expressed in terms of weighted distances to the corresponding hyperplane. This establishes a geometric connection with kernel Support Vector Machines, for which classification is likewise formulated in terms of separating hyperplanes in a feature space \cite{vapnik1998statistical,scholkopf2002learning}.

Finally, in Subsection \ref{sub:probabilistic}, we return to the binary classification setting and give a probabilistic interpretation of the risk functional. We show that the population risk can be expressed directly in terms of the probability of misclassification. Thus, the minimization of the population risk is equivalent to the minimization of the probability of an incorrect classification. The Bayes classifier consequently admits three complementary interpretations: as the solution of a statistical risk minimization problem, as a distinguished direction in the $L^2$ geometry, and as the classifier minimizing the probability of misclassification.

The resulting framework provides a unified setting for supervised and unsupervised learning based on nested RKHSs. Its main feature is the combination of the statistical and computational structure of RKHS methods at each finite approximation level with the density of the nested family in the ambient $L^2$-space. This makes it possible to obtain explicit kernel-based estimators while retaining, in the asymptotic limit, access to the full target function rather than only to its projection onto a prescribed RKHS.

\section{RKHS framework}

Kernel methods in machine learning amount to working in a proper subspace of an $L^2$-space endowed with the structure of an RKHS. In order to extend this approach to the whole $L^2$-space, we introduce the notion of nested RKHSs, namely an increasing family of RKHSs whose union is dense in the ambient $L^2$-space. After introducing the notion of $L^2$-RKHS in Subsection \ref{sub:RKHS}, which is closely related to the classical concept of RKHS and is suitable for our purpose, we introduce and study nested RKHSs in Subsection \ref{sub:Nested_RKHS}. We then define in Subsection \ref{sub:Distinguished_nested_RKHS} a particular class of nested RKHSs, naturally associated with orthonormal bases of continuous functions of the $L^2$-space under consideration. This construction also provides a natural way to establish the existence of nested RKHSs.

\subsection{Reproducing kernel Hilbert spaces}\label{sub:RKHS}

In this subsection, we recall some basic facts about reproducing kernel Hilbert spaces and introduce the notion of $L^2$-RKHS, which will be useful throughout the paper. We refer, for instance, to \cite{rkhs} and \cite{ber} for further developments on RKHS.

\begin{defn}\label{def:rkhs-classical}
Let \(\mathcal{X}\) be a topological space. A \emph{reproducing kernel Hilbert space} (RKHS) with reproducing kernel \(K : \mathcal{X} \times \mathcal{X} \to \mathbb{R}\) is a Hilbert space \(H\) of functions \(f : \mathcal{X} \to \mathbb{R}\) such that:
\begin{itemize}
  \item For every \(x \in \mathcal{X}\), the function \(K(x, \cdot)\) belongs to \(H\).
  \item For every \(f \in H\) and every \(x \in \mathcal{X}\), the reproducing property holds:
  \[
  \langle f, K(x, \cdot) \rangle_H = f(x).
  \]
\end{itemize}
\end{defn}

\begin{rmq}
Let $x\in \mathcal{X}$, and suppose $H$ maximal  with kernel $K$. The function $K(x,\cdot)$ has the smallest norm among all functions in $H$ that take the same value at $x$. Indeed, Theorem 6 in \cite{ber} implies $\displaystyle \min_{f\in H_, f|_{x}=K(x,x)}\|f\|_H=K(x,x)$. 
\end{rmq}

\noindent For our purpose, we use a slightly different definition:

\begin{defn}\label{def:rkhs_}
Let \(\mathcal{X}\) be a separable metric space endowed with its Borel $\sigma$-algebra and a probability measure $\beta$. An \textbf{\(L^2\)-RKHS with reproducing kernel \(K \in \mathcal{M}(\mathcal{X} \times \mathcal{X}, \mathbb{R})\)} is a Hilbert space \(H\) consisting of functions defined on \(\mathcal{X}\), satisfying the following properties:
\begin{itemize}
    \item There exists a linear continuous injection 
    \[
    \iota : H \to L^2(\mathcal{X}, \beta)
    \]
    that allows us to identify each element of \(H\) with a function in \(L^2(\mathcal{X}, \beta)\).
    \item The inner product on \(H\) is induced by \(L^2(\mathcal{X}, \beta)\) endowed with the canonical inner product via \(\iota\), i.e., for all \(f, g \in H\),
    \[
    \langle f, g \rangle_H = \langle \iota(f), \iota(g) \rangle_{L^2(\mathcal{X},\beta)}.
    \]
    \item For every \(x \in \mathcal{X}\), the function \(K(x, \cdot)\) belongs to \(L^2(\mathcal{X}, \beta)\).
    \item The reproducing property holds: for every \(f \in H\) and every \(x \in \mathcal{X}\),
\begin{equation}\label{eq:reproducing}
    \langle f, K(x, \cdot) \rangle_{L^2(\mathcal{X}, \beta)} = f(x).
\end{equation}
    \item Moreover, the image \(\iota(H)\) is a closed subspace of \(L^2(\mathcal{X}, \beta)\).
\end{itemize}
\end{defn}

\begin{rmq}
Note that we no longer suppose in Definition \ref{def:rkhs_} that $K(x,\cdot)\in H$ for all $x\in\mathcal{X}$. 
\end{rmq}

Let us now state the following property, showing that any $L^2$-RKHS is contained in an RKHS in the classical sense of Definition \ref{def:rkhs-classical}. Recall that, by Definition \ref{def:rkhs_}, the elements of $H$ are understood as functions with specified representatives, so that the pointwise evaluation $f(x)$ is well defined.

\begin{thm}\label{maxkernel}
Let $K:\mathcal{X}\times\mathcal{X}\rightarrow\mathbb{R}$ be such that $K(x,\cdot)\in L^2(\mathcal{X},\beta)$ for all $x\in\mathcal{X}$, and suppose that, for all $x,y\in\mathcal{X}$,

$$
\left\langle K(x,\cdot),K(y,\cdot)\right\rangle_{L^2(\mathcal{X},\beta)}=K(x,y).
$$

\noindent Set

$$
H_0:=\overline{\operatorname{span}\left(\bigcup_{x\in\mathcal X}K(x,\cdot)\right)}.
$$
Then $H_0$ is an RKHS, and is the greatest subspace of $L^2(\mathcal{X},\beta)$ for which $K$ satisfies the reproducing property \eqref{eq:reproducing}.
\end{thm}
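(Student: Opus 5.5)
The plan is to work entirely inside the Hilbert space $L^2(\mathcal{X},\beta)$ and reduce everything to the reproducing identity applied to kernel sections. Elements of $H_0$ are $L^2$-classes, so pointwise values must first be fixed. For $f\in H_0$ I will \emph{define} the representative by $\tilde f(x):=\langle f,K(x,\cdot)\rangle_{L^2(\mathcal X,\beta)}$. The hypothesis $\langle K(x,\cdot),K(y,\cdot)\rangle=K(y,x)$ (symmetry follows from the hypothesis, since the inner product is symmetric) shows that $\tilde f=f$ as functions when $f$ is a finite combination of kernel sections.

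Next I will show that $\tilde f=f$ $\beta$-a.e. for general $f\in H_0$. The map $f\mapsto\tilde f(x)$ is continuous for each fixed $x$, with $|\tilde f(x)|\le\|f\|\,K(x,x)^{1/2}$. Take a sequence $f_k\in\operatorname{span}\{K(x,\cdot)\}$ with $f_k\to f$ in $L^2$. Then $f_k=\tilde f_k\to\tilde f$ pointwise, while a subsequence converges to $f$ almost everywhere, so $\tilde f=f$ a.e. With this choice of representatives, $H_0$, equipped with the $L^2$ inner product, is a Hilbert space. It is closed by construction, the inclusion into $L^2$ is an isometric injection, and $\tilde f(x)=\langle f,K(x,\cdot)\rangle$ holds for all $x$ by definition. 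Thus $H_0$ satisfies Definition~\ref{def:rkhs_}. Since moreover $K(x,\cdot)\in H_0$ and the reproducing identity $\langle f,K(x,\cdot)\rangle_{H_0}=f(x)$ holds, $H_0$ is also an RKHS in the classical sense of Definition~\ref{def:rkhs-classical}. Injectivity of $f\mapsto\tilde f$ as a map into functions also follows: if $\tilde f\equiv0$, then $f\perp K(x,\cdot)$ for all $x$, hence $f\perp H_0$, so $f=0$.

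For maximality, let $H\subset L^2(\mathcal X,\beta)$ be any subspace, with chosen representatives, on which $K$ reproduces: $\langle f,K(x,\cdot)\rangle=f(x)$ for all $f\in H$ and all $x\in\mathcal X$. Decompose $f=P f+g$, where $P$ is the orthogonal projection onto the closed subspace $H_0$ and $g\perp H_0$. Since every $K(x,\cdot)$ lies in $H_0$, we get $f(x)=\langle Pf,K(x,\cdot)\rangle+\langle g,K(x,\cdot)\rangle=\widetilde{Pf}(x)+0$. So $f$ coincides everywhere with the representative of $Pf\in H_0$, and therefore $f=Pf$ in $L^2$, giving $g=0$ and $f\in H_0$. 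Hence $H\subset H_0$, which makes $H_0$ the greatest such subspace.

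The main obstacle is the bookkeeping between $L^2$-classes and genuine functions: the definitions require everywhere-defined representatives, and the identity $f(x)=\widetilde{Pf}(x)$ yields equality in $L^2$ only because the chosen representative of $f$ is, in particular, a representative of its class. I would handle this by stating explicitly that subspaces of $L^2$ are compared through their classes, with representatives fixed by the reproducing formula, as the remark preceding the theorem indicates.
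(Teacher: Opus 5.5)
Your proof is correct. Its core observation is the same as the paper's: an element $g$ orthogonal to every kernel section $K(x,\cdot)$ has $\langle g,K(x,\cdot)\rangle=0$ for all $x$, so it cannot be reproduced unless $g=0$. The packaging of the maximality part differs, however.

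The paper begins by positing ``the greatest subspace $H'$'' on which $K$ reproduces. It asserts that $H'$ is closed ``by continuity of the scalar product''. It then applies the Riesz representation theorem on $H'$: a functional vanishing on $H_0$ equals $\langle\cdot,h\rangle$ with $h\in H'$, and $h(x)=\langle K(x,\cdot),h\rangle=0$ forces $h=0$. Hence $H_0$ is dense in, and so equal to, $H'$.

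You instead take an arbitrary reproducing subspace $H$, not assumed closed and not assumed to contain $H_0$, and split $f=Pf+g$ along $H_0\oplus H_0^\perp$. This proves that a greatest such subspace exists rather than presupposing it, and it removes the need to show that subspace is closed.

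Similarly, the paper extends the reproducing identity from $\operatorname{span}\{K(x,\cdot)\}$ to $H_0$ simply ``by continuity''. Your argument fills this in: $\tilde f_k\to\tilde f$ pointwise via $|\tilde f(x)|\le\|f\|\,K(x,x)^{1/2}$, and a subsequence converges a.e. This is exactly what shows that $x\mapsto\langle f,K(x,\cdot)\rangle$ is a representative of the class of $f$. The paper tacitly needs the same argument for its claim that $\overline{H'}$ still reproduces.

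So the paper's version is shorter on the page, while yours is self-contained on the representative bookkeeping that both definitions of RKHS require.
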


\begin{proof}
First, let $H'\subset L^2(\mathcal X,\beta)$ be the greatest subspace for which $K$ satisfies the reproducing property \eqref{eq:reproducing}. Since $\overline{H'}$ also satisfies this property by continuity of the scalar product, we deduce that $H'$ is closed. By construction, $H_0\subset H'$. Let $l:H'\to\mathbb R$ be a continuous linear form that vanishes on $H_0$. By the Riesz representation theorem, there exists $h\in H'$ such that

$$
l=\langle\cdot,h\rangle_{L^2(\mathcal X,\beta)}.
$$
For a.e. $x\in\mathcal X$, since $K(x,\cdot)\in H_0$,

$$
h(x)
=\langle K(x,\cdot),h\rangle_{L^2(\mathcal X,\beta)}
=l(K(x,\cdot))
=0.
$$
Thus $h=0$, and hence $l=0$. Therefore, $H_0$ is dense in $H'$. Since $H_0$ is closed, we conclude that $H'=H_0$. Hence $H_0$ is the greatest subspace of $L^2(\mathcal X,\beta)$ for which $K$ satisfies the reproducing property.

\noindent Now, let
$$
V:=\operatorname{span}\{K(x,\cdot):x\in\mathcal X\}.
$$
For every $f\in V$ and every $x\in\mathcal X$, linearity and the assumed property of $K$ give

$$
\langle f,K(x,\cdot)\rangle_{L^2(\mathcal X,\beta)}=f(x).
$$

Since $V$ is dense in $H_0$, this property extends by continuity of the scalar product to every $f\in H_0$. Thus $H_0$ is an $L^2$-RKHS. Moreover, by definition, $K(x,\cdot)\in H_0$ for all $x\in\mathcal X$, so $H_0$ is an RKHS in the sense of Definition \ref{def:rkhs-classical}.
\end{proof}

Note that if the kernel $K:\mathcal{X}\times\mathcal X\rightarrow\mathbb{R}$ is continuous, then the canonical feature map $x\mapsto K(\cdot,x)$ is continuous in $H$. Hence, by the reproducing property, every $f\in H$ is continuous (see, e.g., \cite{ber}, Ch.~1).

\subsection{Nested RKHS}\label{sub:Nested_RKHS}

In this subsection, we introduce the concept of nested RKHSs, namely an increasing family of RKHS subspaces whose union is dense in an $L^2$-space, and establish some of its basic properties.

\begin{defn}\label{def:maximality}
 With the notations of Definition \ref{def:rkhs_}, an $L^2$-RKHS $L\subset L^2(\mathcal{X}, \beta)$ with reproducing kernel $K\in \mathcal{C}(\mathcal{X}\times \mathcal{X}, \mathbb{R})$ is said {\textbf{maximal}} if $$\displaystyle L=\left\{u\in L^2(\mathcal{X}, \beta): \forall x\in \mathcal{X},u(x)=\left\langle u, K(x,\cdot)\right\rangle\right\}.$$
\end{defn}

 \begin{rmq}\label{lem:maximal}
 An $L^2$-RKHS $L\subset L^2(\mathcal{X}, \beta)$ with reproducing kernel
$K\in \mathcal{C}(\mathcal{X}\times \mathcal{X}, \mathbb{R})$ is maximal
if and only if $K(x,\cdot)\in L$ for all $x\in\mathcal{X}$, i.e. if it is an RKHS in the sense of Definition \ref{def:rkhs-classical}.
Indeed, if $K(x,\cdot)\in L$ for all $x\in\mathcal{X}$, then $L$ is
maximal by Theorem \ref{maxkernel}. Conversely, suppose that $L$ is
maximal and fix $x\in\mathcal{X}$. Then
$K(x,y)=\langle K(x,\cdot),K(y,\cdot)\rangle$ for all $y\in\mathcal{X}$
implies $K(x,\cdot)\in L$ according to Definition \ref{def:maximality}.
\end{rmq}

\begin{defn}\label{def:rkhs}
Let $\mathcal{X} \subset \mathbb{R}^n$ be a subset, and consider some Hilbert space $H\subset L^2(\mathcal{X}, \beta)$ endowed with the canonical scalar product for some probabilistic measure $\beta$ on $\mathcal{X}$. Then $H$ is said to be a {\it{nested RKHS}} if there exists an
increasing sequence $H_1\subset H_2\subset\ldots\subset H$ of
RKHS, each endowed with a reproducing kernel for the scalar
product induced by that of $L^2(\mathcal{X},\beta)$, such that
$\displaystyle\overline{\bigcup_n H_n}=H$. \\
Moreover if $dim(H_n)<+\infty$ for all $n\in\mathbb{N}$, then $H$ is said to be a {\it{finitely generated nested RKHS}}.
\end{defn}

Note that a nested RKHS is not an RKHS in general. In the sequel, we suppose (without loss of generality) $H_n\neq H_{n+1}$ for all $n\in \mathbb{N}$. 

\begin{notation}
With the notations of Definition \ref{def:rkhs}, if $K_n:\mathcal{X}\times \mathcal{X}\rightarrow\mathbb{R}$ denotes the reproducing kernel of the RKHS $H_n$, we say that $\displaystyle \left(H, (H_n)_{n\in\mathbb{N}},(K_n)_{n\in\mathbb{N}}\right)$ forms a nested RKHS in $L^2(\mathcal{X},\beta)$. 
\end{notation}

If $H$ is a Hilbert space and $H'\subset H$ is a closed subspace, we denote by $P^{H}_{H'}$ the orthogonal projection of $H$ onto $H'$. Let us isolate a property that we will use often:

\begin{rmq}
 Fix $x_0\in \mathcal{X}$ arbitrarily. Then for all $y\in \mathcal{X}$, for $0\leq n\leq m$ and for $n\leq k$, \begin{equation}\label{eq:perp}K_n(y,\cdot)\perp K_m(x_0,\cdot)-K_k(x_0,\cdot).\end{equation}
 Indeed $\displaystyle\langle K_n(y,\cdot)\mid K_m(x_0,\cdot)\rangle =K_n(y,x_0)$ for all $y\in \mathcal{X}$, since $K_n(y,\cdot)\in H_n\subset H_m$, so $$\displaystyle\left\langle K_n(y,\cdot)\mid  K_m(x_0,\cdot)-K_k(x_0,\cdot) \right\rangle_H =0.$$  
\end{rmq}

 For $f\in L^2(\mathcal{X},\beta)$, define $\displaystyle (K_nf)(x):=\langle K_n(x,\cdot),f\rangle_{L^2(\mathcal{X},\beta)}.$ Now let us state a useful result.

\begin{thm}\label{dirac} Let $\mathcal{X} \subset \mathbb{R}^n$ be a subset, and some probabilistic measure $\beta\in\mathcal{P}(\mathcal{X})$. Consider a nested RKHS $(H, (H_n)_{n\in\mathbb{N}},(K_n)_{n\in\mathbb{N}})$ in $L^2(\mathcal{X}, \beta)$. Then :\\

0) For all $n\in\mathbb{N},$ $\displaystyle H_n=\overline{\mathrm{span}\{K_n(x,\cdot), x \in \mathcal{X}}\}.$\\

1) For all $x\in \mathcal{X}$, the $K_n(x,\cdot)$ is compatible with the orthogonal projections, in the sense that $\forall k,m,n\in \mathbb{N}$ such that $m,n\leq k$, $\displaystyle P^{H_k}_{H_n}\left(K_m(x,\cdot)\right)=K_{\min(n,m)}(x,\cdot)$.\\

2) The orthogonal projection $P^{L^2(\mathcal{X},\beta)}_{H_n}: L^2(\mathcal{X},\beta)\rightarrow H_n$ is expressed for $n\in\mathbb{N}$, $f\in L^2(\mathcal{X}, \beta)$ and $x\in X$ by $\displaystyle (P^{L^2(\mathcal{X},\beta)}_{H_n}f)(x)=\langle K_n(x,\cdot),  f\rangle.$ \\

3) For all $x\in\mathcal{X}$, $\displaystyle  K_n(x, \cdot)\underset{{n\rightarrow +\infty}}{\rightarrow} K(x,\cdot)$ in norm if $H$ has a reproducing Kernel $K(\cdot,\cdot)$.\\

4)  If $H=L^2(\mathcal{X},\beta)$, then for all $f\in L^2(\mathcal{X},\beta)$, $\displaystyle  K_n f\underset{{n\rightarrow +\infty}}{\rightarrow} f$ in $L^2(\mathcal{X},\beta)$.\\

\end{thm}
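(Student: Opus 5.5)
The plan is to reduce every item to one basic fact: since $H_n$ is an RKHS for the $L^2(\mathcal X,\beta)$ inner product, $K_n(x,\cdot)\in H_n$, and $\langle g,K_n(x,\cdot)\rangle=g(x)$ for every $g\in H_n$ and $x\in\mathcal X$. Each $H_n$ is closed in $L^2(\mathcal X,\beta)$, so all the orthogonal projections below exist.

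\textbf{Item 0.} By the reproducing property, $\langle K_n(x,\cdot),K_n(y,\cdot)\rangle=K_n(y,x)$. Theorem \ref{maxkernel} therefore applies: $\overline{\mathrm{span}\{K_n(x,\cdot)\}}$ is the greatest subspace on which $K_n$ reproduces. It contains $H_n$ and is contained in $H_n$, since every $K_n(x,\cdot)\in H_n$ and $H_n$ is closed. Alternatively, if $h\in H_n$ is orthogonal to all $K_n(x,\cdot)$, then $h(x)=0$ for all $x$, so $h=0$.

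\textbf{Item 2.} Let $f\in L^2(\mathcal X,\beta)$ and write $f=P_{H_n}f+g$ with $g\perp H_n$. Since $K_n(x,\cdot)\in H_n$,
\[
\langle K_n(x,\cdot),f\rangle=\langle K_n(x,\cdot),P_{H_n}f\rangle=(P_{H_n}f)(x).
\]

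\textbf{Item 1.} Let $m,n\le k$, so that $K_m(x,\cdot)\in H_k$. Then $P^{H_k}_{H_n}$ is the restriction of $P^{L^2}_{H_n}$. By item 2, evaluating at $y$ gives
\[
P_{H_n}K_m(x,\cdot)(y)=\langle K_n(y,\cdot),K_m(x,\cdot)\rangle .
\]
If $n\le m$, then $K_n(y,\cdot)\in H_m$ and this equals $K_n(y,x)=K_n(x,y)$. If $m\le n$, then $K_m(x,\cdot)\in H_n$ and this equals $K_m(x,y)$. In both cases we get $K_{\min(n,m)}(x,y)$, which is essentially remark \eqref{eq:perp}. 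The symmetry $K_n(x,y)=K_n(y,x)$ comes from $K_n(x,y)=\langle K_n(y,\cdot),K_n(x,\cdot)\rangle$.

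\textbf{Item 4.} By item 2, $K_nf=P_{H_n}f$. The subspaces $H_n$ increase and their union is dense in $L^2$. Given $\varepsilon>0$, choose $N$ and $g\in H_N$ with $\|f-g\|<\varepsilon$. For $n\ge N$ the best-approximation property gives $\|f-P_{H_n}f\|\le\|f-g\|<\varepsilon$.

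\textbf{Item 3.} If $H$ has reproducing kernel $K$ with $K(x,\cdot)\in H$, apply the argument of item 1 inside $H$: $P^{H}_{H_n}K(x,\cdot)=K_n(x,\cdot)$, because $\langle K_n(y,\cdot),K(x,\cdot)\rangle=K_n(y,x)$. Then the item 4 argument with $H$ in place of $L^2$ (the union is dense in $H$) gives $K_n(x,\cdot)\to K(x,\cdot)$ in norm.

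\textbf{Main obstacle.} The delicate point is item 3. We must interpret ``$H$ has a reproducing kernel'' as requiring $K(x,\cdot)\in H$, or else first project $K(x,\cdot)$ onto $H$. Otherwise the limit is only $P_HK(x,\cdot)$. I would also state the pointwise-representative convention explicitly, so that the evaluations at $x$ used above make sense.
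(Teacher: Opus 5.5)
Your proof is correct and follows essentially the paper's route: item 0 via Theorem~\ref{maxkernel}, and item 2 via the reproducing property applied to the projection. Your decomposition $f=P_{H_n}f+g$ is just the paper's Riesz-representer argument unpacked, and items 3--4 likewise go through $P^H_{H_n}K(x,\cdot)=K_n(x,\cdot)$ plus strong convergence of the increasing projections. The only differences are that you derive item 1 from item 2 by pointwise evaluation rather than from the orthogonality relation \eqref{eq:perp}, and that your caveat on item 3 is justified: the paper tacitly assumes $K(x,\cdot)\in H$ when it applies \eqref{eq:knconvergence} to $f=K(x,\cdot)$.
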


\begin{proof}

\noindent 0) This is a direct consequence of Theorem \ref{maxkernel}.\\

\noindent 1) Let $k\geq m\geq n$. Since $H_n\subset H_k$, write $H_k= H_n\oplus (H_k\cap H_n^\perp)$. Then consider the decomposition $K_m = f_1\bigoplus f_2$, $f_1\in H_n$, $f_2\in H_k\cap H_n^\perp$. Thanks to \eqref{eq:perp}, $f_1=K_n(x,\cdot)$ and $f_2=K_m(x,\cdot)-K_n(x,\cdot)$ for all $x\in \mathcal{X}$. In other words, $P^{H_k}_{H_n}K_m(x,\cdot)=K_n(x,\cdot),$ which proves the claim. Now suppose $k\geq n\geq m$. Then $K_m(x,\cdot)\in H_m\subset H_n\subset H_k$, and we get $P^{H_k}_{H_n}K_m(x,\cdot)=K_m(x,\cdot),$ as claimed. \\

\noindent 2) We first show that $K_nf\in H_n$. Let $R_f\in H_n$ be the Riesz representer of the continuous linear functional
   $
   \displaystyle g\longmapsto \left\langle f,g\right\rangle_{L^2(\mathcal{X},\beta)}
   $
   on $H_n$. For every $x\in\mathcal X$, the reproducing property gives
   $$
   R_f(x)=\left\langle R_f,K_n(x,\cdot)\right\rangle_{L^2(\mathcal{X},\beta)}
   =\langle f,K_n(x,\cdot)\rangle_{L^2(\mathcal{X},\beta)}
   =(K_nf)(x).
   $$
   Hence $K_nf=R_f\in H_n$. Moreover, for every $g\in H_n$,

$$
\left\langle f-K_nf,g\right\rangle_{L^2(\mathcal{X},\beta)}=0
$$
so $f-K_nf\perp H_n$. Since $K_nf\in H_n$, it follows that $K_nf=P^{{L^2(\mathcal{X},\beta)}}_{H_n}f$.\\

\noindent 3) Since $\displaystyle\overline{\bigcup_n H_n}=H$, the orthogonal projections $P^H_{H_n}$ converge strongly to the identity. In other words, for $f\in L^2(\mathcal{X}, \beta)$, \begin{equation}\label{eq:knconvergence}
    \displaystyle\left\|P^H_{H_n} f - f\right\|_H=\underset{n\rightarrow+\infty}{\rightarrow} 0.
    \end{equation}
On the other hand, note that \begin{equation}\label{eq:PkequalKn}P^H_{H_n}K(x,\cdot)=K_n(x,\cdot).\end{equation} Indeed, for all $f\in H_n$, 
$$\displaystyle\langle P^H_{H_n}K(x,\cdot),f\rangle_H=\langle K(x,\cdot),f\rangle_H=f(x)=\langle K_n(x,\cdot),f\rangle_H.$$Since $P^H_{H_n}K(x,\cdot)\in H_n$ and $K_n(x,\cdot)\in H_n$, we obtain \eqref{eq:PkequalKn}. Injecting this equality in \eqref{eq:knconvergence} for $f=K(x,\cdot)$, we get $\|K_n(x,\cdot)-K(x,\cdot)\|_H\rightarrow 0.$\\

\noindent 4) $K_n=P^{L^2(\mathcal{X},\beta)}_{H_n}$ by point 2), and since $\displaystyle\overline{\bigcup_n H_n}=L^2(\mathcal{X},\beta)$ by hypothesis, the orthogonal projections $P^{L^2(\mathcal{X},\beta)}_{H_n}$ converge strongly to the identity. Hence $\displaystyle\left\|K_nf-f\right\|_{L^2(\mathcal{X},\beta)}\rightarrow 0$ for $f\in L^2(\mathcal{X},\beta).$

 \end{proof}

Note that if $H$ is a finitely generated nested RKHS, condition 0) in Theorem \ref{dirac} reduces to $H_n={\mathrm{span}\{K_n(x,\cdot), x \in \mathcal{X}}\}$ and, in this case, the family $\{K_n(x,\cdot):x\in\mathcal X\}$ is linearly dependent, although its elements are pairwise linearly independent.

\subsection{Distinguished nested RKHS}\label{sub:Distinguished_nested_RKHS}

In this subsection, we define a particular class of nested RKHSs and prove that these spaces are naturally in bijection with orthonormal bases of continuous functions of the Hilbert space under consideration. In particular, this construction provides a class of nested RKHSs and establishes their existence.

\begin{defn}\label{def:nested}
 A nested RKHS $(H, (H_n)_{n\in\mathbb{N}},(K_n)_{n\in\mathbb{N}})$ will be said {\it{distinguished}} if for all RKHS $L\subset L^2(\mathcal{X}, \beta)$ with kernel $K\in \mathcal{C}(\mathcal{X}\times \mathcal{X}, \mathbb{R})$, $H_n\subset L\subset H_{n+1}$ for some $n\in\mathbb{N}$ implies $H_n=L$ and $K=K_n$, or $H_{n+1}=L$ and $K=K_{n+1}$. 
\end{defn}

In fact, we will establish that all closed subspaces of $L^2(\mathcal{X},\beta)$ can be endowed with a structure of nested RKHS. Let us first state the following fact:

\begin{lem}\label{lem:maximalBasis}
Let $(T_0,\ldots,T_n)$ be an orthonormal family in $L^2(\mathcal{X},\beta)\cap\mathcal{C}(\mathcal{X},\mathbb{R}).$ Then $\mathrm{span}(T_0,\ldots,T_n)$ is an RKHS for the reproducing kernel $\displaystyle (x,y)\mapsto \sum_{i=0}^n T_i(x)T_i(y).$
\end{lem}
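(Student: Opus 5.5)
Set $K(x,y):=\sum_{i=0}^n T_i(x)T_i(y)$ and $H:=\mathrm{span}(T_0,\ldots,T_n)$. The plan is to check the axioms of Definition \ref{def:rkhs_} directly by finite-dimensional linear algebra, and then to invoke Remark \ref{lem:maximal} (equivalently Theorem \ref{maxkernel}) to upgrade $H$ to an RKHS in the classical sense of Definition \ref{def:rkhs-classical}.

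\textbf{Step 1: representatives and the embedding.} Each element $f\in H$ has a unique expansion $f=\sum_i a_iT_i$, because the $T_i$ are orthonormal and hence linearly independent in $L^2(\mathcal X,\beta)$. We take the continuous function $\sum_i a_iT_i$ as the distinguished representative, so that the evaluation $f(x)$ is well defined pointwise. The map $\iota$ is then the inclusion into $L^2(\mathcal X,\beta)$. It is injective by uniqueness of the coefficients. It is continuous and has closed image because $H$ is finite dimensional. The inner product on $H$ is by definition the restricted $L^2$ one.

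\textbf{Step 2: kernel sections and the reproducing property.} For fixed $x$, the function $K(x,\cdot)=\sum_i T_i(x)T_i$ is a finite linear combination of the $T_i$. It therefore lies in $H$, and in particular in $L^2(\mathcal X,\beta)$. For $f=\sum_j a_jT_j$, orthonormality gives
\[
\langle f,K(x,\cdot)\rangle_{L^2(\mathcal X,\beta)}=\sum_{i,j}a_jT_i(x)\langle T_j,T_i\rangle=\sum_i a_iT_i(x)=f(x).
\]
This holds for every $x$, not merely almost everywhere, since both sides are computed from the chosen continuous representative. Taking $f=K(y,\cdot)$ yields $\langle K(x,\cdot),K(y,\cdot)\rangle=K(x,y)$. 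Moreover, $K$ is continuous on $\mathcal X\times\mathcal X$ because each $T_i$ is continuous.

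\textbf{Step 3: conclusion.} Since $K(x,\cdot)\in H$ for all $x$ and the reproducing property holds, $H$ is an RKHS in the sense of Definition \ref{def:rkhs-classical}. By Remark \ref{lem:maximal} it is also maximal. As a check, Theorem \ref{maxkernel} shows that the closed span of the sections $K(x,\cdot)$ is contained in $H$. Equality also holds: if $g=\sum_i b_iT_i$ is orthogonal to every $K(x,\cdot)$, then $g(x)=0$ for all $x$, hence $g=0$ in $L^2$, and so $b=0$.

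The only delicate point is the a.e.\ versus pointwise issue in Step 2. Elements of $L^2$ are equivalence classes, so the pointwise identity must be stated for the fixed continuous representatives $\sum_i a_iT_i$. Uniqueness of the coefficients, which comes from orthonormality, makes this choice unambiguous. Everything else is routine finite-dimensional computation.
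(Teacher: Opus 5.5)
Your proof is correct and is essentially the paper's argument: the reproducing property comes from the orthonormality computation, and Remark \ref{lem:maximal} links maximality to the classical RKHS property. You run that link in the opposite direction: you observe $K(x,\cdot)=\sum_i T_i(x)T_i\in H$ directly and deduce maximality, whereas the paper shows that any $u$ reproduced by $K$ equals $\sum_i\langle u,T_i\rangle T_i$ and deduces the classical property; also, in your closing check the inclusion of the closed span of the sections in $H$ is immediate from Step 2, and what Theorem \ref{maxkernel} actually supplies is the reverse inclusion.
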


\begin{proof}
A straightforward computation shows that $(x,y)\mapsto \displaystyle \sum_{i=0}^nT_i(x)T_i(y)$ is a reproducing kernel with respect to the canonical scalar product of $L^2(\mathcal{X},\beta)$ for $\mathrm{span}(T_0,\ldots,T_n)$. But, for $u\in L^2(\mathcal{X},\beta),$ $$\displaystyle u(x)=\left\langle u,\sum_{i=0}^nT_i(x)T_i\right\rangle$$ for all $x\in\mathcal{X}$ if and only if $u\in \mathrm{span}(T_0,\ldots,T_n)$. Thus $$\displaystyle\left(\mathrm{span}\left(T_0,\ldots,T_n\right),x\mapsto\sum_{i=0}^nT_i(x)T_i\right)$$ is a maximal $L^2$-RKHS, i.e. an RKHS according to Remark \ref{lem:maximal}.
\end{proof}

\begin{thm}\label{th:distinguished}
Let $H\subset L^2(\mathcal{X},\beta)$. There is a one-to-one correspondence between distinguished nested RKHS $(H, (H_n)_{n\in\mathbb{N}}, (K_n)_{n\in\mathbb{N}})$ and orthonormal basis of continuous functions on $H$.\\
In particular, a distinguished nested RKHS is finitely generated.
\end{thm}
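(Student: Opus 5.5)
We will build the correspondence explicitly in both directions and then check that the two maps are mutually inverse. The target statement is: orthonormal bases $(T_i)_{i\in\mathbb N}$ of $H$ consisting of continuous functions on one side, and distinguished nested RKHS on the other.

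\emph{From bases to distinguished nested RKHS.} Given an orthonormal basis $(T_i)_{i\ge 0}$ of $H$ with $T_i\in\mathcal C(\mathcal X,\mathbb R)$, we set $H_n:=\mathrm{span}(T_0,\ldots,T_n)$ and $K_n(x,y):=\sum_{i=0}^n T_i(x)T_i(y)$. By Lemma \ref{lem:maximalBasis}, each $H_n$ is an RKHS with continuous kernel $K_n$. The spaces increase strictly, and $\overline{\bigcup_n H_n}=H$ because $(T_i)$ is a basis.

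It remains to check that this nested RKHS is distinguished. Since $\dim H_{n+1}=\dim H_n+1$, any $L$ with $H_n\subset L\subset H_{n+1}$ equals $H_n$ or $H_{n+1}$. The kernel is then forced: if $L=H_n$ with kernel $K$, both $K(x,\cdot)$ and $K_n(x,\cdot)$ lie in $L$ (Remark \ref{lem:maximal}, since $L$ is an RKHS in the classical sense). The reproducing property then gives $\langle K(x,\cdot)-K_n(x,\cdot),g\rangle=0$ for all $g\in L$, hence $K=K_n$.

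\emph{From distinguished nested RKHS to bases.} This is the main obstacle: we must show that $\dim H_{n+1}/H_n=1$, that $\dim H_0<\infty$, and that the resulting basis consists of continuous functions. Given a distinguished $(H,(H_n),(K_n))$, suppose $\dim(H_{n+1}\ominus H_n)\ge 2$. Then we pick a nonzero continuous $T\in H_{n+1}\cap H_n^\perp$. Continuity holds because elements of an RKHS with continuous kernel are continuous, as noted after Theorem \ref{maxkernel}; we will have to add the assumption that the $K_n$ are continuous, as in Definition \ref{def:nested}. Normalize $T$ and set $L=H_n\oplus\mathbb R T$ with kernel $K_n(x,y)+T(x)T(y)$. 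By Lemma \ref{lem:maximalBasis}, applied to a continuous orthonormal basis of $H_n$, or by a direct verification of the reproducing property together with Remark \ref{lem:maximal}, $L$ is an RKHS strictly between $H_n$ and $H_{n+1}$. This contradicts distinguishedness.

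We will also need $H_0$ (or $H_1$, depending on indexing) to be finite dimensional, ideally of dimension $1$. For this we apply the same argument with the convention $H_{-1}=\{0\}$, or else read Definition \ref{def:nested} as including the trivial space at the bottom of the chain. This is where we expect to have to fix a convention, and we will state it explicitly. Then each $H_{n+1}\ominus H_n$ is spanned by a unit continuous vector $T_{n+1}$, unique up to sign, and $(T_n)$ is an orthonormal basis of $H$ by density of $\bigcup H_n$. The sign ambiguity is the other delicate point: the correspondence is really with bases up to signs, or we fix signs by a normalization such as requiring $T_n(x_n)>0$ at the first point where $T_n\neq 0$.

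\emph{The two constructions are inverse.} Starting from $(T_i)$, the recovered $T_{n+1}$ spans $H_{n+1}\ominus H_n=\mathbb R T_{n+1}$. Conversely, starting from a distinguished nested RKHS, the spaces $\mathrm{span}(T_0,\ldots,T_n)$ recover $H_n$. The kernels coincide by the uniqueness argument above: $K_n$ is determined by $H_n$, via Theorem \ref{dirac} 2), since $K_n$ is the kernel of the projection onto $H_n$. The final assertion, that a distinguished nested RKHS is finitely generated, follows at once, since $\dim H_n=n+1$.
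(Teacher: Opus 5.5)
Your proof is correct, granting the two conventions you make explicit: continuity of the $K_n$, and a trivial level $H_{-1}=\{0\}$ below $H_0$. Your forward direction is the paper's, except that you actually check uniqueness of the kernel.

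For the converse you take a different route.
\begin{itemize}
\item The paper fixes points $x_0,x_1,\ldots$ and builds the basis explicitly by normalizing $K_0(x_0,\cdot)$ and $K_{k+1}(x_k,\cdot)-K_k(x_k,\cdot)$, which are orthogonal by \eqref{eq:perp}. It then applies Lemma \ref{lem:maximalBasis} to $\mathrm{span}(T_0,\ldots,T_n)$ with kernel $D_n$, and lets distinguishedness identify this pair with $(H_n,K_n)$.
\item You argue by contradiction, level by level. Any continuous unit vector $T\in H_{n+1}\ominus H_n$ yields the intermediate RKHS $H_n\oplus\mathbb{R}T$ with kernel $K_n(x,y)+T(x)T(y)$, so a codimension $\geq 2$ would contradict distinguishedness.
\end{itemize}
The mechanism is the same in both: adjoin one continuous unit vector of the increment and let distinguishedness force equality with the next level.

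The paper's version buys an explicit kernel formula for the $T_k$, which its later remarks on the maps $A$ and $B$ use. Yours needs no choice of points and no induction. The paper's step ``for some $j$, $(\mathrm{span}(T_0,\ldots,T_n),D_n)=(H_j,K_j)$'' is only justified once $\mathrm{span}(T_0,\ldots,T_{n-1})=H_{n-1}$ is known, because distinguishedness only concerns spaces sandwiched between consecutive levels. At $n=0$ that step needs exactly your convention. Without it nothing constrains $H_0$, which could for instance be two-dimensional, and the correspondence breaks.

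Your sign remark is also right and sharpens the statement. $K_n$ only sees the products $T_i(x)T_i(y)$, and $B$ applied to $A((T_k)_k)$ returns $\operatorname{sign}(T_{k+1}(x_k))\,T_{k+1}$. So the bijection is with orthonormal bases up to sign, or with a sign normalization as you suggest.
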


\begin{proof}
Let $(T_k)_{k\in\mathbb{N}}$ be an orthonormal basis of continuous functions in $H$. Then $$\displaystyle K_n(x,y):=\sum_{i=0}^nT_i(x)T_i(y)$$ defines a reproducing Kernel of $H_n:=\mathrm{span}(T_0,\ldots,T_n)$. 
By Lemma \ref{lem:maximalBasis}, the $L^2$-RKHSs $(H_n,K_n)$ are maximal, and since $dim(H_{n+1}/H_n)=1$, all RKHS $L$ with $H_n\subset L\subset H_{n+1}$ for some $n\in\mathbb{N}$ is equal to $H_n$ or $H_{n+1}$.\\

Reciprocally, let $(H,(H_i)_{i\in\mathbb{N}}, (K_i)_{i\in\mathbb{N}})$ be a distinguished nested RKHS. Fix $x_0\in \mathcal{X}$ arbitrarily. Up to renumber $(H_i)_i$, we can suppose $H_0\neq \{0\}.$ Choose $(x_i)_{i\in\mathbb{N}}\in \mathcal{X}^\mathbb{N}$ such that $K_0(x_0,\cdot)\neq 0$ and $K_{i+1}(x_i,\cdot)-K_{i}(x_i,\cdot)\neq 0$ for all $i\in\mathbb{N}$, which is possible since $H_i\neq H_{i+1}$. Thanks to \eqref{eq:perp}, we can build by recurrence an orthonormal system of $H$ by $$\displaystyle T_0:=\frac{K_0(x_0,\cdot)}{\left\|K_0(x_0,\cdot)\right\|_{L^2(\mathcal{X},\beta)}}$$ and \begin{equation}\label{eq:tkplusone}\displaystyle T_{k+1}:=\frac{K_{k+1}(x_k,\cdot)-K_k(x_k,\cdot)}{\left\|K_{k+1}(x_k,\cdot)-K_k(x_k,\cdot)\right\|_{L^2(\mathcal{X},\beta)}}.\end{equation} 
Let us prove that $(T_n)_n$ is an orthonormal basis of $H$. Lemma \ref{lem:maximalBasis} shows that $\mathrm{span}(T_0,\ldots,T_n)$ is a RKHS for the reproducing kernel $$\displaystyle D_n:(x,y)\mapsto \sum_{i=0}^nT_i(x)T_i(y).$$ Since $H$ is distinguished, for some $j\in \mathbb{N}$, $$\displaystyle \left(\mathrm{span}\left(T_0,\ldots,T_n\right), D_n\right)=\left(H_j,K_j\right).$$ From $\mathrm{span}(T_0,\ldots,T_n)\subset H_n$, we get $j\leq n$. But if $j<n$, we would have $T_n\in H_j\subset H_{n-1}$, which contradicts $T_n\in H_{n-1}^\perp\setminus \{0\}$. Thus $j = n$. Thus \begin{equation}\label{eq:completeness}\displaystyle\mathrm{span}(T_0,\ldots,T_n) = H_n \text{ and } K_n=D_n.\end{equation} This implies $\overline{\mathrm{span}(T_0,\ldots,T_i,\ldots)}=H$ as claimed.\\

\end{proof}

\begin{rmq}
We deduce from \eqref{eq:completeness} that $\displaystyle K_n(x,y)= \sum_{i=0}^nT_i(x)T_i(y)$, where $T_i$ are defined by \eqref{eq:tkplusone}, independently of the choice of $(x_i)_{i\in\mathbb{N}}\in\mathcal{X}^\mathbb{N}.$ 
\end{rmq}

\begin{rmq}
The operations $$\displaystyle A:(T_k)_{k=1,\ldots,n}\mapsto \left((x,y)\mapsto\sum_{i=0}^k T_i(x)T_i(y)\right)_{k=1,\ldots,n}$$ and $$\displaystyle B:(K_k)_{k=0,\ldots,n-1}\mapsto \left(\frac{K_{k+1}(x_k,\cdot)-K_{k}(x_k,\cdot)}{\left\| K_{k+1}(x_k,\cdot)-K_{k}(x_k,\cdot)\right\|_{L^2(\mathcal{X},\beta)}}\right)_{k=0,\ldots,n-1}$$ are reciprocal. Indeed $B\circ A = Id$ follows by a direct computation, while $A\circ B = Id$ follows from \eqref{eq:completeness}.
\end{rmq}

Once we have a distinguished nested RKHS $(H, (H_i)_i, (K_i)_i)$, given a strictly increasing map $k:\mathbb{N}\rightarrow\mathbb{N}$,
we still obtain a nested RKHS $(H, (H'_j)_j, (K'_j)_j)$ by setting
\[
H'_j:=H_{k(j)},\qquad K'_j:=K_{k(j)}.
\]
This operation will be implicitly used in the following examples.

\begin{exemple}
Let $\beta$ be a probability measure on $[-1,1]$. Let $(T_k)_{k\in\mathbb N}$ be an orthonormal basis of $L^2([-1,1],\beta)$ consisting of polynomials, with $\deg(T_k)=k$. Then $\displaystyle H_n=\mathcal P_n([-1,1])=\operatorname{span}(T_0,\ldots,T_n)$ has reproducing kernel $$\displaystyle K_n(x,y)=\sum_{k=0}^n T_k(x)T_k(y).$$ More generally, for $d\in\mathbb N^*$, an orthonormal basis of $\displaystyle H=L^2\left([-1,1]^d,\beta^{\otimes d}\right)$ is given by
\[
\left(
(x_1,\ldots,x_d)\mapsto\prod_{j=1}^d T_{k_j}(x_j)\right)_{(k_1,\ldots,k_d)\in\mathbb N^d}.
\]
Here $\displaystyle H_n=\mathcal P_n([-1,1])^{\otimes d}$ consists of polynomials of degree at most n in each variable. The reproducing kernel of $H_n$ is
\[
K_n(x,y)=\sum_{0\leq k_1,\ldots,k_d\leq n}
\prod_{j=1}^d T_{k_j}(x_j)T_{k_j}(y_j)=\prod_{j=1}^d
\left(\sum_{k=0}^n T_k(x_j)T_k(y_j)\right).
\]
Then $(H,(H_n)_n,(K_n)_n)$ is a nested RKHS. Indeed, Lemma \ref{lem:maximalBasis} ensures that each $H_n$ is maximal in the sense of Definition \ref{def:maximality}. 
\end{exemple}

\begin{exemple}
Let $\beta$ be a probability measure on $\mathcal{X}\subset \mathbb{R}^d$ such that all polynomials are in $L^2(\mathcal{X},\beta)$. Denote by $N_j\in\mathbb{N}$ the dimension of the space of homogeneous polynomials in $d$ variables of degree $j$. Let
$(T_{i,j})_{j\in\mathbb{N},1\leq i\leq N_j}$ be an orthonormal basis of the space of polynomials in $L^2(\mathcal{X},\beta)$, ordered according to their
degree, namely $\displaystyle \deg(T_{i,j})=j.$
\[
H_j:=\mathrm{span}\{T_{i,k}: 0\leq k\leq j, 1\leq i\leq N_k\},
\]
which is the space of polynomials of degree at most $j$. Then
$\displaystyle\left(
\overline{\bigcup_{j\geq 0}H_j},
(H_j)_{j\in\mathbb{N}},
(K_j)_{j\in\mathbb{N}}
\right)$
is a nested RKHS, where
\[
K_j(x,y)
=
\sum_{k=0}^j\sum_{i=1}^{N_k} T_{i,k}(x)T_{i,k}(y).
\]
\end{exemple}

\section{Statistical learning in RKHSs}\label{subsec:statistical}

In this section, we consider random elements taking values in an RKHS continuously embedded in an $L^2$ space. We study their expectations and establish a strong law of large numbers in both an unsupervised setting, considered in Subsection \ref{sec:random}, and a supervised setting, considered in Subsection \ref{sec:framework}.\\

Let $(\Omega,\mathcal T,\mu)$ be a probability space, let
$\mathcal X\subset\mathbb R^n$ be equipped with a Borel probability
measure $\beta$, and let
$X\in L^2(\Omega,\mathcal X,\mu)$ be a random variable with values in $\mathcal{X}$.
Assume that its distribution $\rho_X:=X_*\mu$ is absolutely continuous
with respect to $\beta$, with density
$f_X\in L^2(\mathcal X,\beta)$, i.e.
\[
\rho_X(B)=\int_B f_X\,d\beta
\]
for every Borel set $B\subset\mathcal X$. Let \(H \subset L^2(\mathcal{X}, \beta)\) be a reproducing kernel Hilbert space (RKHS) with kernel \(K\), with inner product induced by that of \(L^2(\mathcal{X}, \beta)\). We assume that the elements of \(H\) are functions that admit pointwise evaluation, and not merely equivalence classes in \(L^2\). We also suppose $\displaystyle\int_\Omega K(X,X)^\frac{1}{2}d\mu<+\infty.$ We denote by $\mathcal H:=L^2(\Omega,H,\mu)$.

\subsection{Unsupervised learning}\label{sec:random}

Denote by $i:x\mapsto K(x,\cdot)$ the canonical embedding $i:\mathcal{X}\rightarrow H.$ Using the terminology of \cite{fpca} p. 178, $i(X):\Omega\rightarrow H$ is a {\it{random element}}, i.e. a measurable map from a probability space to a Hilbert space endowed with the Borel $\sigma$-algebra. Since $\displaystyle \left\|i(X)\right\|_H^2=K(X,X)$ and $$\displaystyle E_\mu(\left\|i(X)\right\|_H)=\int_\Omega K(X,X)^\frac{1}{2}d\mu<+\infty,$$ $E_\mu(i(X))$ is well defined in the Bochner sense (Definition 7.2.1 in \cite{fpca}).

\begin{defn}\label{def:almostsure}
According to \cite{fpca}, we say that a sequence $a_n\in \mathcal{H}$ of random elements converges almost surely towards $a\in H$ if $$\displaystyle\mu^{\otimes\infty}\left(\left\{(\omega_n)_n\in \Omega^\mathbb{N}, \lim_{n\rightarrow +\infty}\|a_n(\omega_n)-a\|=0\right\}\right)=1,$$ where $\mu^{\otimes\infty}$ denotes the product measure. We note $\displaystyle\lim_{n\rightarrow +\infty}a_n\underset{a.s.}{=}a.$
\end{defn}

\begin{pro}\label{pro:estimator1}Let $X\in L^2(\Omega,\mathcal{X},\mu)$ be a random variable with $\displaystyle \int_\Omega K(X,X)^\frac{1}{2}d\mu<+\infty.$ Then
 $$\displaystyle E_\mu(i(X))=P^{L^2(\mathcal{X},\beta)}_H f_X.$$ 
Let $(X_i)_{i\in\mathbb{N}}\in L^2(\Omega,\mathcal{X})^\mathbb{N}$ be a sequence of i.i.d. random elements with $\displaystyle \int_\Omega K(X_0,X_0)^\frac{1}{2}d\mu<+\infty.$ Then
$$\displaystyle \lim_{N\rightarrow + \infty}\frac{1}{N}\sum_{i=1}^N K(X_i,\cdot)\underset{a.s.}{=}P^{L^2(\mathcal{X},\beta)}_H f_X,$$
\end{pro}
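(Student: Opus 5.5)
The plan has two parts: first identify the Bochner expectation $E_\mu(i(X))$ through the reproducing property, then obtain the almost sure convergence from the strong law of large numbers for i.i.d. random elements in a separable Hilbert space.

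\textbf{Identifying the expectation.} Set $m:=E_\mu(i(X))\in H$, which exists by the integrability assumption $\int_\Omega K(X,X)^{1/2}d\mu<+\infty$ recalled before Definition \ref{def:almostsure}. Bochner integrals commute with continuous linear functionals. For each $g\in H$ I apply this to $\langle\cdot,g\rangle_H$ and get
\[
\langle m,g\rangle_H=E_\mu\langle K(X,\cdot),g\rangle_{L^2(\mathcal X,\beta)}=E_\mu\, g(X)=\int_{\mathcal X} g\,f_X\,d\beta=\langle f_X,g\rangle_{L^2(\mathcal X,\beta)} .
\]
The middle equality uses the reproducing property \eqref{eq:reproducing}. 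The third is the change of variables for $\rho_X=X_*\mu$, which has density $f_X$. The integrability of $g(X)$ comes from $|g(X)|\le \|g\|_H K(X,X)^{1/2}$ (Cauchy--Schwarz) together with the hypothesis.

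Hence $f_X-m\perp H$ while $m\in H$. Since $H$ is closed in $L^2(\mathcal X,\beta)$, this characterizes the orthogonal projection, so $m=P^{L^2(\mathcal X,\beta)}_H f_X$. This is the same argument as in point 2) of Theorem \ref{dirac}. Equivalently, $m(x)=\langle K(x,\cdot),f_X\rangle=(Kf_X)(x)$.

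\textbf{Almost sure convergence.} The random elements $i(X_i)=K(X_i,\cdot)$ are i.i.d. and take values in $H$. Each is measurable because $i$ is measurable; this holds when $K$ is continuous, or more generally because $x\mapsto\langle K(x,\cdot),g\rangle=g(x)$ is measurable for every $g$, which suffices by Pettis' theorem. The space $H$ is separable as a closed subspace of $L^2(\mathcal X,\beta)$ with $\mathcal X\subset\mathbb R^n$. Each $i(X_i)$ is Bochner integrable with mean $m$.

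The strong law of large numbers in separable Banach spaces (Mourier's theorem; see the SLLN for random elements in \cite{fpca}) then gives
\[
\frac1N\sum_{i=1}^N K(X_i,\cdot)\to m \quad\text{a.s. in } H .
\]
This is exactly the statement once we substitute $m=P^{L^2(\mathcal X,\beta)}_H f_X$ from the first part, and read a.s. in the sense of Definition \ref{def:almostsure} on the product space.

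\textbf{Main obstacle.} The delicate step is checking the hypotheses of the Hilbert-space SLLN: measurability and separability, and that only first-moment integrability is required, not a second moment. If a source only gives the $L^2$ version, I would instead use a truncation argument. Approximate $i(X)$ by finite-valued simple random elements $s_k(X)$ with $E\|i(X)-s_k(X)\|_H<\varepsilon$. Apply the scalar SLLN to the finitely many coordinates of $s_k(X)$, and to the real variables $\|i(X_i)-s_k(X_i)\|_H$. Then let $\varepsilon\to0$ along a countable sequence.
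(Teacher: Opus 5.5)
Your proposal is correct and follows essentially the same route as the paper. You identify $E_\mu(i(X))$ by pairing with an arbitrary $g\in H$, use the reproducing property and the change of variables $\rho_X=f_X\,d\beta$ to get $f_X-E_\mu(i(X))\perp H$, and then invoke the strong law of large numbers for i.i.d.\ random elements (Theorem 7.7.2 of \cite{fpca}) under the first-moment condition $E_\mu\|i(X)\|_H=E_\mu K(X,X)^{1/2}<+\infty$. Your added checks of measurability (via Pettis), separability of $H$ and integrability of $g(X)$, which the paper takes for granted, only make the argument more complete.
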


\begin{proof}
$E_\mu(i(X))=E_\mu(K(X,\cdot))\in H$ is well defined as Bochner integral, since $\|i(X)\|:\Omega\rightarrow\mathbb{R}$ is a measurable function, and $E_\mu(\|i(X)\|)<+\infty.$ 
Recall that $\rho_X=f_Xd\beta=X_*\mu.$ But for all $f\in H$, $$\displaystyle E_\mu(\langle i(X), f\rangle_H)=E_\mu(f(X))=\int_\mathcal{X} f dX_*\mu = \int_\mathcal{X} f \cdot f_X d\beta= \int_\mathcal{X} P^{L^2(\mathcal{X},\beta)}_H f \cdot f_X d\beta= \int_\mathcal{X} f \cdot P^{L^2(\mathcal{X},\beta)}_H f_X d\beta,$$ so $$\displaystyle E_\mu(i(X))=P^{L^2(\mathcal{X},\beta)}_H f_X.$$
So by the law of large numbers for random elements (Theorem 7.7.2 p.204 in \cite{fpca}), since $E(\|i(X)\|)= E(K(X,X)^\frac{1}{2})<+\infty$, $$\displaystyle\lim_{N\rightarrow + \infty}\frac{1}{N}\sum_{i=1}^N i(X_i)= \lim_{N\rightarrow + \infty}\frac{1}{N}\sum_{i=1}^N K(X_i,\cdot)\underset{a.s.}{=}P^{L^2(\mathcal{X},\beta)}_H f_X.$$

\end{proof}

\subsection{Supervised learning}\label{sec:framework}

We restrict our attention to binary classification. This entails no loss of generality (see, e.g., Section 9.2 in \cite{hastie2009elements}). We work in the classical framework of \cite{cuck}. Let $\rho$ be a Borel probability measure on $\mathcal{X}\times\mathcal{Y}$. We suppose that $\rho$ is the law of a random variable $(X,Y):(\Omega,\mu)\rightarrow\mathcal{X}\times\mathcal{Y}$, that is,
$
\displaystyle (X,Y)_*\mu=\rho.
$
Define $$\displaystyle f_m:=\mu(Y=1)f_{X|Y=1}-\mu(Y=-1)f_{X|Y=-1},$$ where $f_{X|Y=\pm1}$ denote the conditional densities of $X$ given $Y=\pm 1$ with respect to $\beta$. Note that $f_m$ is the density,  with respect to $\beta$, of the signed measure $\displaystyle A\mapsto E_\mu(Y\mathds{1}_{\left\{X\in A\right\}})$. In particular, $|f_m|\leq f_X$, and hence $f_m\in L^2(\mathcal{X},\beta)$ since $f_X\in L^2(\mathcal{X},\beta).$
In this setting, the preceding framework can be reformulated in the following proposition:

\begin{pro}\label{lem:alternative}
Let $(X,Y):(\Omega,\mu)\to\mathcal X\times\{-1,1\}$ be a random variable. Then, possibly after extending the underlying probability space to $(\widetilde\Omega,\widetilde\mu)$, there exists a family of random variables $(Y_x)_{x\in\mathcal X}$, with $$\displaystyle Y_x:\widetilde\Omega\rightarrow \{-1,1\},\qquad x\in\mathcal X,$$ such that the application $p:x\mapsto \widetilde\mu(Y_x=1)$ is measurable on $\mathcal{X}$, and
$$
p(X)
=
\mathbb E_\mu\bigl(\mathds{1}_{\{Y=1\}}\mid X\bigr)
\qquad\text{a.s.}
$$
Moreover, the application $Y_X:\tilde{\Omega}\rightarrow\{-1,1\}$ defined by $\displaystyle Y_X(\omega):=Y_{X(\omega)}(\omega)$ is measurable and satisfies
$
\displaystyle (X,Y_X)\overset{\mathcal D}{=}(X,Y).
$

\end{pro}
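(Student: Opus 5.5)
The plan is to build the family $(Y_x)$ by coupling all labels to a single auxiliary uniform variable. We first fix a measurable version of the conditional probability. Since $\mathcal X\subset\mathbb R^n$ is a standard Borel space, the Doob--Dynkin lemma gives a Borel function $p_0:\mathcal X\to[0,1]$ with
\[
\mathbb E_\mu\bigl(\mathds 1_{\{Y=1\}}\mid X\bigr)=p_0(X)\quad\text{a.s.}
\]
If necessary we truncate $p_0$ to $[0,1]$, which only changes it on a $\rho_X$-null set. We then extend the probability space by setting $\widetilde\Omega:=\Omega\times[0,1]$ and $\widetilde\mu:=\mu\otimes\lambda$, where $\lambda$ is Lebesgue measure. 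We write $U(\omega,u):=u$ and lift $X,Y$ to $\widetilde\Omega$ through the first coordinate, so that $U$ is uniform and independent of $(X,Y)$.

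Next we define $Y_x(\omega,u):=1$ if $u<p_0(x)$ and $Y_x(\omega,u):=-1$ otherwise. Each $Y_x$ is measurable, and $\widetilde\mu(Y_x=1)=\lambda([0,p_0(x)))=p_0(x)$. Hence $p=p_0$ is measurable and $p(X)=\mathbb E_\mu(\mathds 1_{\{Y=1\}}\mid X)$ a.s., which proves the first claim.

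For the composite map, $Y_X(\omega,u)=1$ exactly when $U(\omega,u)<p_0(X(\omega))$. This set is measurable because $(\omega,u)\mapsto(p_0(X(\omega)),u)$ is measurable and $\{(s,u):u<s\}$ is Borel. To identify the joint law, we take a Borel set $B\subset\mathcal X$ and apply Fubini together with the independence of $U$ and $X$:
\[
\widetilde\mu\bigl(X\in B,\;Y_X=1\bigr)=\int_{\{X\in B\}}p_0(X)\,d\mu=\int_{\{X\in B\}}\mathbb E_\mu\bigl(\mathds 1_{\{Y=1\}}\mid X\bigr)\,d\mu=\mu(X\in B,\;Y=1).
\]
The middle equality is the defining property of conditional expectation, since $\{X\in B\}\in\sigma(X)$. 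Subtracting this from $\mu(X\in B)=\widetilde\mu(X\in B)$ gives the corresponding identity for the label $-1$. Sets of the form $B\times\{\pm1\}$ generate the Borel $\sigma$-algebra of $\mathcal X\times\{-1,1\}$ and form a $\pi$-system, so the two joint laws agree, that is, $(X,Y_X)\overset{\mathcal D}{=}(X,Y)$.

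The only delicate point is measurability. We need a Borel version of $p$ defined at every $x\in\mathcal X$, not just $\rho_X$-almost everywhere, and we need joint measurability of $(x,\omega)\mapsto Y_x(\omega)$ so that the composition $Y_X$ is a random variable. Building every $Y_x$ from the single uniform $U$ handles both requirements at once, which is why we use this coupling rather than an abstract product of independent labels.
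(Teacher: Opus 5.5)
Your proposal is correct and matches the paper's proof: you extend to $\Omega\times[0,1]$ with an independent uniform $U$, set $Y_x=2\mathds{1}_{\{U<p(x)\}}-1$ for a Borel version $p$ of $\mathbb E_\mu(\mathds 1_{\{Y=1\}}\mid X)$, and identify the joint law by Fubini together with the defining property of conditional expectation. The paper uses $u\le p(x)$ rather than $u<p(x)$, which makes no difference, and your appeal to $\mathcal X$ being standard Borel is unnecessary, since Doob--Dynkin holds for any measurable target space (an arbitrary subset of $\mathbb R^n$ need not be standard Borel anyway).
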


\begin{proof}
Let $(\widetilde\Omega,\widetilde{\mathcal T},\widetilde\mu)
=(\Omega\times[0,1],\mathcal T\otimes\mathcal B([0,1]),
\mu\otimes\lambda)$, where $\lambda$ denotes the Lebesgue measure on
$[0,1]$. We identify $X$ with its lift to $\widetilde\Omega$, and let
$U(\omega,u)=u$. Then $U$ is uniformly distributed on $[0,1]$ and
independent of $X$.

There exists a measurable function $p:\mathcal X\to[0,1]$ such that
$
\displaystyle p(X)=E_\mu(\mathds{1}_{\{Y=1\}}\mid X)
$ a.s.
Define
\[
Y_x(\omega,u):=2\mathds{1}_{\{u\leq p(x)\}}-1,
\qquad x\in\mathcal X.
\]
Then $Y_X=2\mathds{1}_{\{U\leq p(X)\}}-1$ is measurable. Moreover, for
every Borel set $A\subset\mathcal X$,
\[
\widetilde\mu(X\in A,Y_X=1)
=\int_A \lambda([0,p(x)])\,d\rho_X(x)\\
=\int_A p(x)\,d\rho_X(x)\\
=\mu(X\in A,Y=1),
\]
where $\rho_X:=X_*\mu$. The same identity holds for $Y_X=-1$. Hence $(X,Y_X)$ and $(X,Y)$
have the same joint distribution.
\end{proof}

Using the notations of Subsection \ref{sec:random}, we can state the following results, which generalize Proposition \ref{pro:estimator1}, the latter corresponding to the case $Y=1$. 

\begin{pro}\label{pro:estimator2}
We have  \begin{equation}\label{eq:estimator}
\displaystyle E_\mu(YK(X,\cdot))=P^{L^2(\mathcal{X},\beta)}_{H}f_m.
\end{equation}
Moreover, let $\displaystyle\left((X_i, Y_i)\in(\mathcal{X}\times\mathcal{Y})^\Omega,i\in\mathbb{N}\right)$ be a sequence of i.i.d. random variables, such that $X_1\in L^2(\Omega,\mathcal{X},\mu)$ and $\displaystyle \int_\Omega K(X_1,X_1)^\frac{1}{2}d\mu<+\infty$. Then
\begin{equation}\label{eq:empiricalConv}\displaystyle \lim_{N\rightarrow + \infty}\frac{1}{N}\sum_{i=1}^N Y_i K(X_i,\cdot)\underset{a.s.}{=}P^{L^2(\mathcal{X},\beta)}_{H} f_m.\end{equation}
\end{pro}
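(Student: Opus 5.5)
The plan is to follow the proof of Proposition \ref{pro:estimator1}, replacing the random element $i(X)=K(X,\cdot)$ with the weighted random element $Y\,i(X)=YK(X,\cdot)$, which takes values in $H$.

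\textbf{Well-definedness of the expectation.} First I check that $E_\mu(YK(X,\cdot))$ exists as a Bochner integral in $H$. Measurability follows because $Y$ is a measurable real scalar and $i(X)$ is a random element, so their product is measurable. Since $|Y|=1$,
\[
\|YK(X,\cdot)\|_H=K(X,X)^{\frac12},
\]
and this is integrable by assumption. Hence the Bochner expectation is well defined, exactly as in Subsection \ref{sec:random}.

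\textbf{Identification of the expectation.} Next I identify $E_\mu(YK(X,\cdot))$ by testing it against an arbitrary $f\in H$. Since the Bochner integral commutes with continuous linear functionals, the reproducing property gives
\[
\langle E_\mu(YK(X,\cdot)),f\rangle_H=E_\mu(Yf(X)).
\]
The key point is that $f_m$ is the $\beta$-density of the signed measure $A\mapsto E_\mu(Y\mathds 1_{\{X\in A\}})$. From this, $E_\mu(Y g(X))=\int_{\mathcal X} g\, f_m\,d\beta$ for indicators $g$, then for simple functions. To pass to general $f\in H$, I split $E_\mu(Yf(X))=E_\mu(Y\mathds 1_{\{Y=1\}}f(X))+E_\mu(Y\mathds 1_{\{Y=-1\}}f(X))$. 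Each part is $\pm\mu(Y=\pm1)\int f\,f_{X|Y=\pm1}\,d\beta$, and both integrals converge absolutely because $f\in L^2$ and $f_{X|Y=\pm1}\mu(Y=\pm1)\le f_X\in L^2$. Alternatively, one can use monotone or dominated convergence with the bound $|f_m|\le f_X$ and the integrability of $|f(X)|$.

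This yields $\langle E_\mu(YK(X,\cdot)),f\rangle=\langle f_m,f\rangle_{L^2}=\langle P_H^{L^2}f_m,f\rangle$ for every $f\in H$. Since both $E_\mu(YK(X,\cdot))$ and $P^{L^2}_H f_m$ lie in $H$, they are equal, which proves \eqref{eq:estimator}.

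\textbf{Almost sure convergence.} Finally, the variables $Z_i:=Y_iK(X_i,\cdot)$ are i.i.d. random elements of $H$ with $E\|Z_1\|_H=E(K(X_1,X_1)^{1/2})<\infty$. The strong law of large numbers for random elements (Theorem 7.7.2 in \cite{fpca}) then gives
\[
\frac1N\sum_{i=1}^N Z_i\to E(Z_1)\quad\text{a.s.},
\]
and combined with \eqref{eq:estimator} this is \eqref{eq:empiricalConv}.

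\textbf{Main obstacle.} The only delicate step is justifying $E_\mu(Yf(X))=\int f f_m\,d\beta$ for a general $f\in H$ rather than for indicators. This is the approximation and integrability argument via $|f_m|\le f_X$ described above. Everything else is a direct transcription of Proposition \ref{pro:estimator1}.
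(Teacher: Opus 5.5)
Your proposal is correct and follows the paper's proof essentially step by step. You test $E_\mu(YK(X,\cdot))$ against $f\in H$, use the reproducing property to get $E_\mu(Yf(X))$, split according to $Y=\pm1$ to reach $\int_{\mathcal X} f\,f_m\,d\beta=\langle P^{L^2(\mathcal X,\beta)}_H f_m,f\rangle$, and conclude with the strong law of large numbers for i.i.d.\ $H$-valued random elements, using $\|YK(X,\cdot)\|_H=K(X,X)^{1/2}$. Your integrability check via $\mu(Y=\pm1)f_{X|Y=\pm1}\le f_X\in L^2(\mathcal X,\beta)$ just makes explicit a step the paper leaves implicit.
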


\begin{proof}

By the same method as in the proof of Proposition \ref{pro:estimator1}, for $f\in H$, $$\displaystyle\left\langle E_\mu(YK(X,\cdot)),f\right\rangle_H
=E_\mu\left(Y\left\langle K\left(X,\cdot\right),f\right\rangle_H\right)
=E_\mu(Yf(X)) = \int_{\mathcal{X}\times \{-1,1\}}yf(x)d\rho $$ 

$$\displaystyle = \mu(Y=1)\int_\mathcal{X} f\cdot f_{X|Y=1}d\beta-\mu(Y=-1)\int_\mathcal{X} f\cdot f_{X|Y=-1}d\beta = \int_\mathcal{X}f\cdot f_md\beta=  \left\langle  P^{L^2(\mathcal{X},\beta)}_{H} f_m, f\right\rangle_H.$$

So when $N\rightarrow +\infty$, by the law of large numbers for random elements (Theorem 7.7.2 p.204 in \cite{fpca}), since $E(\|i(X)\|)= E(K(X,X)^\frac{1}{2})<+\infty$,

$$\frac{1}{N}\sum_{i=1}^N Y_i K(X_i,\cdot)
\underset{a.s.}{\longrightarrow}
E_\mu(YK(X,\cdot))
=
P^{L^2(\mathcal{X},\beta)}_H f_m.$$

\end{proof}

\begin{rmq}
Let us give an alternative way to get \eqref{eq:estimator}. For all $z\in\mathcal{X}$, we have $\displaystyle E_\mu(YK(X,z))=\int_\mathcal{X} K(x,z)f_m(x)d\beta(x).$ Theorem \ref{dirac} 2) implies that this last integral is equal to $\displaystyle P^{L^2(\mathcal{X},\beta)}_Hf_m(z)$.
\end{rmq}





\section{Bayesian machine learning via nested RKHSs}\label{sec:bayesian}

In this section, we apply the framework of nested RKHSs to Bayesian machine learning. More precisely, our goal is to extend the results of Section \ref{subsec:statistical} to the nested setting in order to recover $f_X$ or $f_m$, rather than only their projections onto a given function space, as in Propositions \ref{pro:estimator1} and \ref{pro:estimator2}. We first introduce some notation that will be used throughout this section.\\

Suppose we are given an infinite labeled sequence
$\displaystyle
S^X_\infty = (x_1, x_2, \ldots) \in \mathcal{X}^{\mathbb{N}},
$
and a random sample
$\displaystyle
\Sigma_\infty^X = (X_1, X_2, \ldots),
$
i.e., a sequence of i.i.d.\ random variables taking values in \(\mathcal{X}\). For each \(N \in \mathbb{N}^*\), we denote by
\[
S^X_N = (x_1, \ldots, x_N) \quad \text{and} \quad \Sigma_N^X = (X_1, \ldots, X_N)
\]
the corresponding finite subsequences. Given an infinite sequence \(\omega = (\omega_1, \omega_2, \ldots) \in \Omega^{\mathbb{N}}\), or in the finite case \(\omega = (\omega_1, \ldots, \omega_N) \in \Omega^N\), we write
\[
\Sigma_N^X(\omega) := \big( X_1(\omega_1), \ldots, X_N(\omega_N) \big).
\]
Similarly, we consider labeled sequences
\[
S_\infty := \big\{ (x_i, y_i) \in \mathcal{X} \times \mathcal{Y} : i \in \mathbb{N} \big\}
\quad \text{and} \quad
\Sigma_\infty := \big\{ (X_i, Y_i) \in (\mathcal{X} \times \mathcal{Y})^\Omega : i \in \mathbb{N} \big\},
\]
where \(\Sigma_\infty\) is a sequence of i.i.d.\ random variables. If \(\pi_N : (\mathbb{R} \times \mathbb{R})^\mathbb{N} \to (\mathbb{R} \times \mathbb{R})^N\) denotes the projection onto the first \(N\) pairs, we define
\[
S_N := \pi_N S_\infty \quad \text{and} \quad \Sigma_N := \pi_N \Sigma_\infty.
\]

Let $(H,(H_n)_n,(K_n)_n)$ be a nested RKHS.  We place ourselves in the case of binary classification. 
We use the same notations of Subsection \ref{sec:framework}. Expressed in this context, Propositions \ref{pro:estimator1} and \ref{pro:estimator2} allow us to formulate:

\begin{pro}\label{pro:def_fn}
Fix $n\in\mathbb N$. For each $N\in\mathbb N^*$, define
\[
f^*_{n,\Sigma^X_N}:=
\frac1N\sum_{i=1}^N K_n(X_i,\cdot),
\qquad
f^*_{n,\Sigma_N}:=
\frac1N\sum_{i=1}^N Y_iK_n(X_i,\cdot).
\]
Then $f^*_{n,\Sigma^X_N}$ and $f^*_{n,\Sigma_N}$ are unbiased
estimators of $P^{L^2(\mathcal X,\beta)}_{H_n}f_X$ and
$P^{L^2(\mathcal X,\beta)}_{H_n}f_m$, respectively. Moreover, as $N\rightarrow +\infty,$
\[
f^*_{n,\Sigma^X_N}\underset{a.s.}{\longrightarrow}
P^{L^2(\mathcal X,\beta)}_{H_n}f_X
\text{  and  }
f^*_{n,\Sigma_N}\underset{a.s.}{\longrightarrow}
P^{L^2(\mathcal X,\beta)}_{H_n}f_m.
\]
\end{pro}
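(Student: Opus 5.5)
The plan is to apply Propositions \ref{pro:estimator1} and \ref{pro:estimator2} with the fixed space $H:=H_n$ and kernel $K:=K_n$, and to obtain unbiasedness from linearity of the Bochner integral. Fix $n\in\mathbb N$. Each $H_n$ is an RKHS contained in $L^2(\mathcal X,\beta)$ whose inner product is induced by that of $L^2(\mathcal X,\beta)$, and whose elements are genuine functions with pointwise evaluation. This is the standing setting of Section \ref{subsec:statistical}. The only hypothesis still to check is the integrability condition $\int_\Omega K_n(X_1,X_1)^{1/2}\,d\mu<+\infty$, which must hold for every $n$.

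This integrability is the only step requiring care, and I expect it to be the main obstacle. My first approach is to assume it is implicitly part of the standing hypothesis for each level $n$. Alternatively, one can derive it in the case where $H$ itself has a reproducing kernel $K$ with $\int K(X,X)^{1/2}d\mu<\infty$. By \eqref{eq:PkequalKn}, $K_n(x,\cdot)=P^H_{H_n}K(x,\cdot)$, so
\[
K_n(x,x)=\|K_n(x,\cdot)\|^2\le \|K(x,\cdot)\|^2=K(x,x),
\]
and the bound passes to $K_n$ by monotonicity. In the distinguished case, $K_n(x,x)=\sum_{i\le n}T_i(x)^2$. There it suffices to require $E|T_i(X)|<\infty$ for each $i$, since $\big(\sum_{i\le n} T_i^2\big)^{1/2}\le \sum_{i\le n}|T_i|$. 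I would state this as the assumption under which the proposition holds.

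Given integrability, unbiasedness follows from linearity of the Bochner expectation together with the identical distribution of the $X_i$ (respectively of the pairs $(X_i,Y_i)$). Indeed,
\[
E_\mu\big(f^*_{n,\Sigma^X_N}\big)=\frac1N\sum_{i=1}^N E_\mu\big(K_n(X_i,\cdot)\big)=E_\mu\big(i(X_1)\big),
\]
and the first part of Proposition \ref{pro:estimator1} identifies this with $P^{L^2(\mathcal X,\beta)}_{H_n}f_X$. The same computation with $Y_iK_n(X_i,\cdot)$ and \eqref{eq:estimator} gives $P^{L^2(\mathcal X,\beta)}_{H_n}f_m$. Here $\|Y_iK_n(X_i,\cdot)\|=K_n(X_i,X_i)^{1/2}$ because $|Y_i|=1$, so the same integrability suffices.

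For almost sure convergence, the random elements $K_n(X_i,\cdot)$ (respectively $Y_iK_n(X_i,\cdot)$) are i.i.d. in $H_n$, being measurable functions of i.i.d. variables, and they are Bochner integrable. The strong law of large numbers in Hilbert spaces, exactly as invoked in the second parts of Propositions \ref{pro:estimator1} and \ref{pro:estimator2}, then gives almost sure convergence in the sense of Definition \ref{def:almostsure} to the stated projections. Since $H_n$ carries the $L^2(\mathcal X,\beta)$ norm, this convergence also holds in $L^2(\mathcal X,\beta)$.
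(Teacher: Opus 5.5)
Your proposal is correct and follows the paper's approach: the paper gives no separate proof and obtains the proposition by applying Propositions \ref{pro:estimator1} and \ref{pro:estimator2} at level $n$ (with $H=H_n$, $K=K_n$, unbiasedness from linearity and identical distribution), tacitly treating the integrability condition as a standing hypothesis at every level, as you do. Your extra assumption in the distinguished (more generally, finitely generated) case is in fact automatic: since $f_X\in L^2(\mathcal X,\beta)$, Cauchy--Schwarz gives $E_\mu\bigl(K_n(X,X)^{1/2}\bigr)=\int_{\mathcal X}K_n(x,x)^{1/2}f_X\,d\beta\le(\dim H_n)^{1/2}\|f_X\|_{L^2(\mathcal X,\beta)}$, because $\int_{\mathcal X}K_n(x,x)\,d\beta(x)=\dim H_n$ whenever $H_n$ is finite-dimensional.
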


In principle, we can recover $f_m$ by first letting
$N\rightarrow+\infty$ and then $n\rightarrow+\infty$:
for fixed $n$, the empirical estimator converges to
$P^{L^2(\mathcal X,\beta)}_{H_n}f_m$, while
\[
P^{L^2(\mathcal X,\beta)}_{H_n}f_m
\longrightarrow f_m
\quad\text{in }L^2(\mathcal X,\beta).
\]
In practice, however, this is impossible, since we do not have access
to the whole sample $\Sigma_\infty$, but only to a finite sample
$\Sigma_N$. So we will formulate the problem of finding the best approximation of $f_m$ in $H_n$ in the $L^2$-sense in terms of an empirical minimization problem. For each fixed $n$, this will provide an estimator converging to the projection of $f_m$ onto $H_n$ as $N\rightarrow+\infty$. We will then use a diagonal selection to obtain a sequence converging to $f_m$ when both $n$ and $N$ tend to infinity.\

In Subsection \ref{sub:risk}, we construct a risk functional for estimating $P^{L^2(\mathcal{X},\beta)}_{H_n}f_m$ in $H_n$, and we provide an empirical estimator. We prove the consistency of this procedure as the sample size tends to infinity. We give a geometric interpretation of this setting, which is intrinsic since it is independent of $n$, in Subsection \ref{lift}. In Subsection \ref{sub:minL2}, we prove the consistency of the estimation of $f_m$ by a sequence obtained through a diagonal selection of the empirical estimators for each $n$ and $N$. We establish sufficient optimality properties for this extracted sequence, which allow us to construct it. Finally, in Subsection \ref{sub:probabilistic}, we give a probabilistic interpretation of these constructions.

\subsection{Risk functionals}\label{sub:risk}

In this subsection, we formulate the estimation of $f_X$ and $f_m$ as the minimization of suitable risk functionals and study the consistency of the corresponding empirical risk minimization procedures.\\

Let the notation of Subsection \ref{sec:framework} prevail. Recall that $S_N=\{(x_i, y_i)\}_{i=1}^N$, $\Sigma_N=\{(X_i, Y_i)\}_{i=1}^N$ where the latter is a sequence of i.i.d. random variables with distribution $\rho$. Let $H\subset L^2(\mathcal{X},\beta)$ be an RKHS with a reproducing kernel with respect to the canonical scalar product. Let $h:\mathbb{R}^2\rightarrow \mathbb{R}$ be continuous and satisfy $h(x,y)=0$ if and only if $x=y$; we define a risk functional $\mathcal{E}_{h,\rho}: H\rightarrow \mathbf{R}$ by \begin{equation}\label{eq:riskfunctional}\displaystyle \mathcal{E}_{h,\rho}(f):= \int_\Omega h(f(X_1),Y_1)d\mu = \int_{\mathcal{X}\times\mathcal{Y}}  h(f(x),y)d\rho.\end{equation}

We also define the associated empirical risk functional $\displaystyle\mathcal{E}^*_{h}:(\mathcal{X}\times \mathcal{Y})^N\times H\rightarrow \mathbf{R}$ by \begin{equation}\label{eq:empiricalrisk}\displaystyle\mathcal{E}_{h}^*(S_N,f):=\frac{1}{N}\sum_{(x_i,y_i)\in S_N} h\left(f(x_i), y_i\right).\end{equation}

For an RKHS $H\subset L^2(\mathcal{X},\beta)$ with reproducing kernel
$K$, we denote by $H^*$ the map from $\mathcal{X}^N$ to the set of
subspaces of $L^2(\mathcal{X},\beta)$, defined by
$
\displaystyle H^*(x_1,\ldots,x_N)
=
\operatorname{span}\left(K(x_i,\cdot):1\leq i\leq N\right).
$\\

When $S_N$ is fixed, we will denote $$\displaystyle\mathcal{E}_{h,{S_N}}^*:f\mapsto \mathcal{E}_{h}^*(S_N,f).$$ The following proposition shows that empirical risk minimization over $H$ reduces to minimization over the finite-dimensional subspace $H^*(S_N)$.

\begin{pro}[\cite{cuck2} Corollary 2.26]\label{lemempiric}Let $D\subset H$ be a closed subset. Let $n\in \mathbb{N}$, and $\mathcal{E}_{h,S_N}^*:H\rightarrow\mathbb{R}$ an empirical risk functional defined by \eqref{eq:empiricalrisk}. We have
\begin{equation}\label{eq:sufficient}
\displaystyle \inf_{f\in D}\mathcal{E}_{h,S_N}^*(f)=\inf_{f\in P^H_{H^*(S_N)}(D)}\mathcal{E}_{h,S_N}^*(f).
\end{equation}
\end{pro}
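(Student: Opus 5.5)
The key observation is that the empirical risk $\mathcal{E}^*_{h,S_N}(f)$ depends on $f$ only through the values $f(x_1),\ldots,f(x_N)$, and these values are unchanged by the orthogonal projection onto $H^*(S_N)=\operatorname{span}(K(x_i,\cdot):1\leq i\leq N)$. The plan is therefore to show that $\mathcal{E}^*_{h,S_N}\circ P^H_{H^*(S_N)}=\mathcal{E}^*_{h,S_N}$ on $H$. The equality of infima over $D$ and over $P^H_{H^*(S_N)}(D)$ will then follow by comparing the two sets of attained values.

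\textbf{Step 1 (invariance).} Fix $f\in H$ and write $f=P f+g$, where $P:=P^H_{H^*(S_N)}$ and $g\perp H^*(S_N)$. Note that $H^*(S_N)$ is finite dimensional, hence closed, so $P$ is well defined. By the reproducing property \eqref{eq:reproducing} in $H$ (the scalar product being that of $L^2(\mathcal X,\beta)$, and $K(x_i,\cdot)\in H$ since $H$ is an RKHS), we have
\[
f(x_i)=\langle f,K(x_i,\cdot)\rangle=\langle Pf,K(x_i,\cdot)\rangle+\langle g,K(x_i,\cdot)\rangle=(Pf)(x_i).
\]
Summing $h(f(x_i),y_i)$ over $i$ then gives $\mathcal{E}^*_{h,S_N}(f)=\mathcal{E}^*_{h,S_N}(Pf)$.

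\textbf{Step 2 (equality of value sets).} By Step 1,
\[
\{\mathcal{E}^*_{h,S_N}(f):f\in D\}=\{\mathcal{E}^*_{h,S_N}(Pf):f\in D\}=\{\mathcal{E}^*_{h,S_N}(u):u\in P(D)\}.
\]
Taking infima of both sides yields \eqref{eq:sufficient}.

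This argument uses neither the closedness of $D$ nor any property of $h$. The only point needing care is that $K(x_i,\cdot)\in H$, so that the orthogonal decomposition interacts with point evaluation as in Step 1. For a mere $L^2$-RKHS, I would instead use $\operatorname{span}(P^{L^2}_H K(x_i,\cdot))$, which reproduces values on $H$ by Theorem~\ref{dirac}~2). There is no real obstacle beyond this bookkeeping.
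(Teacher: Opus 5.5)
Your proof is correct and is essentially the paper's argument: the paper also uses the reproducing property and the orthogonality of $f-P^H_{H^*(S_N)}f$ to each $K(x_i,\cdot)$ to get $f(x_i)=(P^H_{H^*(S_N)}f)(x_i)$, hence $\mathcal{E}^*_{h,S_N}(f)=\mathcal{E}^*_{h,S_N}(P^H_{H^*(S_N)}f)$. Where the paper concludes ``using a minimizing sequence,'' your observation that the two sets of attained values coincide is the same final step, stated more directly.
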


\begin{proof} Let $f\in D$. For all $(x,y)\in S_N$,  $f(x) = \langle f \mid K(x,\cdot) \rangle =\langle  P^{H}_{H^*(S_N)}f \mid K(x,\cdot) \rangle = (P^{H}_{H^*(S_N)}f)(x)$. Thus $\mathcal{E}^*_{h,S_N}(f)=\mathcal{E}^*_{h,S_N}(P^{H}_{H^*(S_N)}f)$. Equality \eqref{eq:sufficient} follows using a minimizing sequence.\end{proof}

\noindent We now specialize Proposition \ref{lemempiric} to the loss $h(x,y)=h_1(x,y):=1-xy.$ From now on, we write $\displaystyle\mathcal{E}_{1,\rho}:=\mathcal{E}_{h_1,\rho}$ and $\displaystyle\mathcal{E}^*_{1,\Sigma_N}:=\mathcal{E}^*_{h_1,\Sigma_N}$ to simplify the notation. The following identity will play a key role.


\begin{lem}
Let $f\in L^2(\mathcal{X},\beta).$ Then \begin{equation}\label{eq:reformulation}\displaystyle \mathcal{E}_{1,\rho}(f)=1-\langle f , f_m\rangle_{L^2(\mathcal{X},\beta)}.\end{equation}
\end{lem}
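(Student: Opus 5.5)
The plan is to unfold the definition of the risk functional \eqref{eq:riskfunctional} with $h=h_1$ and use linearity of the integral. We write
\[
\mathcal{E}_{1,\rho}(f)=\int_{\mathcal X\times\mathcal Y}\bigl(1-yf(x)\bigr)\,d\rho(x,y)=1-\int_{\mathcal X\times\mathcal Y} y f(x)\,d\rho(x,y),
\]
using that $\rho$ is a probability measure. This splitting requires $yf(x)$ to be $\rho$-integrable. Since $|y|=1$, integrability reduces to $\int_{\mathcal X}|f|f_X\,d\beta<+\infty$, which follows from Cauchy--Schwarz in $L^2(\mathcal X,\beta)$ because $f,f_X\in L^2(\mathcal X,\beta)$.

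The second step identifies $\int y f(x)\,d\rho=E_\mu(Yf(X))$ with $\langle f,f_m\rangle_{L^2(\mathcal X,\beta)}$, exactly as in the proof of Proposition \ref{pro:estimator2}. Conditioning on $Y=\pm1$ gives
\[
E_\mu(Yf(X))=\mu(Y=1)\int_{\mathcal X} f\,f_{X|Y=1}\,d\beta-\mu(Y=-1)\int_{\mathcal X} f\,f_{X|Y=-1}\,d\beta=\int_{\mathcal X} f\,f_m\,d\beta.
\]
Equivalently, $f_m$ is the $\beta$-density of the signed measure $A\mapsto E_\mu(Y\mathds 1_{\{X\in A\}})$. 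The identity therefore holds first for indicator functions $f=\mathds 1_A$, then for simple functions by linearity, and finally for general $f$ by dominated convergence. The domination comes from $|f_m|\le f_X$ together with the integrability established above.

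The main obstacle is that $f$ is only an element of $L^2(\mathcal X,\beta)$, an equivalence class, whereas $\mathcal E_{1,\rho}$ evaluates $f$ pointwise at $X$. We will note that this causes no problem. Since $\rho_X\ll\beta$, two representatives of $f$ that agree $\beta$-a.e. also agree $\rho_X$-a.e., so $\mathcal E_{1,\rho}(f)$ does not depend on the representative. With this remark the computation above is legitimate, and it yields \eqref{eq:reformulation}.
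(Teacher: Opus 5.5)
Your proof is correct and takes the paper's route: use linearity to reduce to $E_\mu(Yf(X))$, then split over $Y=\pm1$ with the conditional densities to obtain $\int_{\mathcal X} f f_m\,d\beta$. Your additions are sound and fill in what the paper's informal density computation leaves implicit: the integrability check $\int|f|f_X\,d\beta\le\|f\|_{L^2(\mathcal X,\beta)}\|f_X\|_{L^2(\mathcal X,\beta)}$, and the observation that $\rho_X\ll\beta$ makes $f(X)$ independent of the chosen representative.
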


\begin{proof}

\begin{align*}
 \displaystyle 1-\mathcal{E}_{1,\rho}(f)&=\displaystyle E_\mu(Y_1f(X_1))=\int_{\mathcal{X}\times \mathcal{Y}}yf(x)d\rho(x,y)=\int_{\mathcal{X}\times \mathcal{Y}}yf(x)\mu(X=x,Y=y)d\beta(x)\\
 \displaystyle &=\sum_{j=\pm 1}j\int_\mathcal{X}f(x)\mu(X=x\mid y=j)\mu(Y=j)d\beta(x)=\int_\mathcal{X}f(x) f_m(x)d\beta(x)=\langle f , f_m\rangle_{L^2(\mathcal{X},\beta)}.
\end{align*}
\end{proof}

The previous representation reduces the minimization of $\mathcal{E}_{1,\rho}$ to the maximization of a linear functional over $H$. The following theorem characterizes the maximizers of such functionals and will be our main tool in what follows.

\begin{thm}\label{evaluation} Let $\alpha_1,\ldots,\alpha_N\in \mathbb{R}^N$ and $x_1,\ldots,x_N\in\mathcal{X}^N$. Define $\displaystyle g:=\sum_{i=1}^N\alpha_i K(x_i,\cdot),$ and suppose $g\neq 0$. Then $$\displaystyle \left\|g\right\|_{L^2(\mathcal{X},\beta)}=\underset{f\in H\setminus\{0\}}{\sup}\frac{\left|\sum_{i=1}^N \alpha_i f(x_i)\right|}{\left\|f\right\|_{L^2(\mathcal{X},\beta)}}=\sqrt{\sum_{i,j=1}^N \alpha_i\alpha_j K(x_i,x_j)}=\underset{f\in H^*(x_1,\ldots,x_N)\setminus\{0\}}{\sup}\frac{\left|\sum_{i=1}^N\alpha_i f(x_i)\right|}{\left\|f\right\|_{L^2(\mathcal{X},\beta)}}.$$
Moreover, the supremum is attained precisely by the nonzero scalar multiples of $g$.\\
Consequently, $g$ is the unique minimum-norm interpolant among all functions taking the same values at $x_1,\ldots,x_N$.

\noindent Consequently, the linear functional $\displaystyle f\mapsto \sum_{i=1}^N \alpha_i\delta _{x_i}(f)$ on $H$ has norm $$\displaystyle \sqrt{\sum_{i,j=1}^N\alpha_i\alpha_j K(x_i,x_j)}=\left\|\sum_{i=1}^N\alpha_iK(x_i,\cdot)\right\|_{L^2(\mathcal{X},\beta)}\geq 0.$$
\end{thm}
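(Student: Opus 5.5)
The whole statement rests on one observation: by the reproducing property the functional $f\mapsto\sum_i\alpha_i f(x_i)$ is the inner product with $g$. Indeed, $H$ is an RKHS with kernel $K$, so $K(x_i,\cdot)\in H$ and $g\in H$. For every $f\in H$, the reproducing property \eqref{eq:reproducing} and linearity give
\[
\sum_{i=1}^N\alpha_i f(x_i)=\Big\langle f,\sum_{i=1}^N\alpha_iK(x_i,\cdot)\Big\rangle_{L^2(\mathcal X,\beta)}=\langle f,g\rangle_{L^2(\mathcal X,\beta)}.
\]
Taking $f=g$, and then using $\langle K(x_i,\cdot),K(x_j,\cdot)\rangle=K(x_i,x_j)$ (the reproducing property applied to $f=K(x_j,\cdot)$), I get $\|g\|^2=\sum_{i,j}\alpha_i\alpha_jK(x_i,x_j)$. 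This is the middle equality.

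For the first supremum I apply Cauchy--Schwarz: $|\langle f,g\rangle|\le\|f\|\,\|g\|$, with equality if and only if $f$ and $g$ are collinear. Since $g\neq0$, the ratio is at most $\|g\|$, the value $\|g\|$ is attained at $f=g$, and it is attained exactly at the nonzero multiples of $g$. Because $g\in H^*(x_1,\ldots,x_N)\subset H$, the supremum over the smaller space is bounded above by the one over $H$ and is also attained at $g$, so the two suprema coincide. Alternatively, Proposition \ref{lemempiric} shows that $f$ and $P^H_{H^*}f$ agree at the $x_i$ while $\|P^H_{H^*}f\|\le\|f\|$, which gives the same conclusion. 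The operator norm statement at the end is just a restatement of the first supremum.

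The minimum-norm interpolant claim needs slightly more care than the rest, because as literally stated it is a claim about $g$ among functions of $H$ sharing its values $g(x_j)$, and that is not what the duality argument gives directly. I would prove it through orthogonality. Let $f\in H$ satisfy $f(x_j)=g(x_j)$ for all $j$. Then $\langle f-g,K(x_j,\cdot)\rangle=0$ for each $j$, so $f-g\perp H^*(x_1,\ldots,x_N)\ni g$. Pythagoras gives $\|f\|^2=\|g\|^2+\|f-g\|^2\ge\|g\|^2$, with equality only if $f=g$. This proves that $g$ is the unique minimum-norm interpolant. The one point to note is that the interpolant must be taken in $H$, where the reproducing property holds; this is how I would read the statement.
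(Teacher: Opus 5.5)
Your proof is correct and follows the paper's argument: the reproducing property turns $f\mapsto\sum_i\alpha_i f(x_i)$ into $\langle f,g\rangle$, and Cauchy--Schwarz with its equality case gives every claim, with the middle equality coming from $\langle K(x_i,\cdot),K(x_j,\cdot)\rangle=K(x_i,x_j)$. Your Pythagoras argument for the minimum-norm interpolant (read, rightly, inside $H$) is valid, but this claim also follows in one line from the same bound: any interpolant $f\in H$ satisfies $\|g\|^2=\langle f,g\rangle\le\|f\|\,\|g\|$, with equality only when $f=g$.
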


\begin{proof}
All assertions follow immediately from the Cauchy--Schwarz inequality, which yields, for every $f\in H$, 

\begin{align*}
\left|\sum_{i=1}^N\alpha_i f(x_i)\right|
=
\left|
\left\langle
\sum_{i=1}^N\alpha_iK(x_i,\cdot),f
\right\rangle_{L^2(\mathcal X,\beta)}
\right|   &\le
\left\|
\sum_{i=1}^N\alpha_iK(x_i,\cdot)
\right\|_{L^2(\mathcal X,\beta)}
\,\left\|f\right\|_{L^2(\mathcal X,\beta)}      \\
&=
\sqrt{\sum_{i,j=1}^N
\alpha_i\alpha_jK(x_i,x_j)}
\,\left\|f\right\|_{L^2(\mathcal X,\beta)}.
\end{align*}
Equality holds if and only if the two vectors are collinear.
\end{proof}

\begin{rmq}\label{rmq:geqone}
Thanks to the construction of Theorem \ref{th:distinguished}, there are many RKHS $H\subset L^2(\mathcal{X},\beta)$ with reproducing kernels
$K$  with respect to the canonical scalar product such that $1\in H$.
In this case, we can derive some interesting inequalities.
Taking $\alpha_i=1$ in Theorem \ref{evaluation}, we obtain
\begin{equation}\label{eq:nonvanishing}
N\leq
\sup_{f\in H,\ \|f\|=1}
\left|\sum_{i=1}^N f(x_i)\right|
=
\sqrt{\sum_{i,j=1}^N K(x_i,x_j)}.
\end{equation}
Moreover, taking $\alpha_i=e_i$ and using $1\in H$ yields
$K(x_i,x_i)\geq 1$.
\end{rmq}

Now we apply the previous considerations to the framework presented in Subsection \ref{subsec:statistical}. Let $\displaystyle B:=\{g\in H, \left\|g\right\|_{L^2(\mathcal{X},\beta)}= 1\}$ be the unit sphere. Since $\Sigma_N$ is random, empirical minimizers can naturally be regarded as $B$-valued random variables, that is, as elements of $\mathcal{M}(\Omega,B)$.

\begin{cor}\label{cor:riskdensity}
Let $N\in\mathbb{N}$. For every sample $S_N=((x_i,y_i))_{i=1}^N$, define $$\displaystyle f^*_{S_N}:=\frac{1}{N}\sum_{i=1}^N y_i K(x_i,\cdot).$$ Then, provided that the denominators below do not vanish,
\begin{equation}\label{eq:empiricalepsilonone}\displaystyle \frac{P^{L^2(\mathcal{X},\beta)}_{H}f_m}{\left\|P^{L^2(\mathcal{X},\beta)}_{H}f_m\right\|_{L^2(\mathcal{X},\beta)}} \in\arg\min_{f\in B} \mathcal{E}_{1,\rho}\quad\text{}\end{equation}
and, for every $\omega\in\Omega^N$, 

\begin{equation}\label{eq:globalmin}
\left\{\frac{ f^*_{\Sigma_N(\omega)} }{\left\|f^*_{\Sigma_N(\omega)}\right\|_{L^2(\mathcal{X},\beta)}}\right\} =\arg\min_{f\in B}\mathcal{E}^*_{1,\Sigma_N(\omega)}.\end{equation}
\end{cor}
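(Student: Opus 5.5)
The plan is to reduce both minimization problems to maximizing a linear functional on the unit sphere $B$ of $H$. Then Cauchy--Schwarz, in the form of Theorem \ref{evaluation}, identifies the maximizers. The population part and the empirical part are handled separately.

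\textbf{Population part.} By \eqref{eq:reformulation}, for $f\in B\subset H$ we have $\mathcal{E}_{1,\rho}(f)=1-\langle f,f_m\rangle_{L^2(\mathcal X,\beta)}$. Since $f\in H$ and $f_m-P^{L^2(\mathcal{X},\beta)}_{H}f_m\perp H$, this equals
\[
1-\left\langle f,P^{L^2(\mathcal{X},\beta)}_{H}f_m\right\rangle_{L^2(\mathcal X,\beta)} .
\]
Minimizing $\mathcal{E}_{1,\rho}$ over $B$ is therefore the same as maximizing $f\mapsto\langle f,P_Hf_m\rangle$ over unit vectors of $H$. By Cauchy--Schwarz the maximum equals $\|P_Hf_m\|$. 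It is attained exactly at $f=P_Hf_m/\|P_Hf_m\|$, which is well defined under the nonvanishing hypothesis. This gives \eqref{eq:empiricalepsilonone}, in fact with uniqueness.

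\textbf{Empirical part.} Fix $\omega$ and write $(x_i,y_i)=(X_i(\omega_i),Y_i(\omega_i))$. Using $y_i^2=1$ and the reproducing property,
\[
\mathcal{E}^*_{1,\Sigma_N(\omega)}(f)=1-\frac1N\sum_{i=1}^N y_if(x_i)=1-\left\langle f,f^*_{\Sigma_N(\omega)}\right\rangle_{L^2(\mathcal X,\beta)} .
\]
Now apply Theorem \ref{evaluation} with $\alpha_i=y_i/N$ and $g=f^*_{\Sigma_N(\omega)}\neq0$. The supremum of $\bigl|\sum_i\alpha_i f(x_i)\bigr|$ over $f\in B$ is $\|g\|$, and it is attained only at $\pm g/\|g\|$. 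The value $+\|g\|$, with the correct sign, is attained only at $g/\|g\|$, because $-g/\|g\|$ gives $-\|g\|$.

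Hence the set of minimizers is exactly the singleton $\{g/\|g\|\}$, which is \eqref{eq:globalmin}. Proposition \ref{lemempiric} is consistent with this, since the minimizer lies in $H^*(S_N)$, but it is not needed.

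\textbf{Main obstacle.} The only delicate point is the exact equality of sets in \eqref{eq:globalmin}, which requires uniqueness. This follows from the equality case of Cauchy--Schwarz, namely collinearity with a positive scalar, combined with the norm constraint.
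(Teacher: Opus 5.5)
Your proposal is correct and follows essentially the same route as the paper: both risks are rewritten as $1$ minus an inner product (via \eqref{eq:reformulation} and the reproducing property), and the minimizers are identified through the equality case of Cauchy--Schwarz in Theorem \ref{evaluation}. Your version is more explicit than the paper's in three places. You justify replacing $f_m$ by $P^{L^2(\mathcal{X},\beta)}_{H}f_m$ via orthogonality, you handle the sign in the equality case, and you obtain uniqueness of the population minimizer, which the paper uses later in the strong-consistency corollary without stating it here.
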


\begin{proof}
Let $f\in B$. Fix $\omega=(\omega_1,\ldots,\omega_N)\in \Omega^N$. Applying Theorem \ref{evaluation} to the sample $\Sigma_N(\omega)$ with $\alpha_i=Y_i(\omega_i)$ and $x_i=X_i(\omega_i)$ yields the inequality $$\displaystyle \sum_{i=1}^N Y_i(\omega_i) f(X_i(\omega_i))\leq \sum_{i=1}^N Y_i(\omega_i) \frac{f^*_{\Sigma_N(\omega)}(X_i(\omega_i))}{\left\|f^*_{\Sigma_N(\omega)}\right\|_{L^2(\mathcal{X},\beta)}},$$ with equality if and only if $\displaystyle f=\frac{f^*_{\Sigma_N(\omega)}}{\left\|f^*_{\Sigma_N(\omega)}\right\|_{L^2(\mathcal{X},\beta)}}$. In other words, $$\displaystyle \mathcal{E}^*_{1,\Sigma_N(\omega)}(f)\geq \mathcal{E}^*_{1,\Sigma_N(\omega)}\left(\frac{f^*_{\Sigma_N(\omega)}}{\left\|f^*_{\Sigma_N(\omega)}\right\|_{L^2(\mathcal{X},\beta)}}\right),$$ so we get the last part of \eqref{eq:empiricalepsilonone}. Moreover,  \eqref{eq:reformulation} gives 
$$\displaystyle \mathcal{E}_{1,\rho}(f)=1-\left\langle f,f_m\right\rangle_{L^2(\mathcal{X},\beta)}\geq 1-\left\langle \frac{P^{L^2(\mathcal{X},\beta)}_H f_m}{\left\|P^{L^2(\mathcal{X},\beta)}_H f_m\right\|_{L^2(\mathcal{X},\beta)}},f_m\right\rangle_{L^2(\mathcal{X},\beta)}=\mathcal{E}_{1,\rho}\left( \frac{P^{L^2(\mathcal{X},\beta)}_H f_m}{\left\|P^{L^2(\mathcal{X},\beta)}_H f_m\right\|_{L^2(\mathcal{X},\beta)}}\right).$$
\end{proof}

\noindent\textbf{Study of consistency}. By slight abuse of notation, for $f\in\mathcal{M}(\Omega, B)$, we still write $\mathcal{E}_{h,\Sigma_N}^*(f)$ for the random variables $\omega\mapsto \mathcal{E}_{h,\Sigma_N(\omega)}^*(f(\omega))$, and we write $f^*_{\Sigma_N}$ for the random element $\displaystyle\omega\mapsto f^*_{\Sigma_N(\omega)}$. Following Vapnik \cite{vapnik1998statistical} Chapter 3, given a risk functional $\mathcal{E}_{h,\rho}:B\rightarrow\mathbb{R}$, provided that for each $N\in \mathbb{N}$ and each $S_N\in(\mathcal{X}\times\mathcal{Y})^N$, the set of minimizers $\displaystyle \displaystyle\underset{f\in B}{\text{argmin }}\mathcal{E}_{h,S_N}^*(f)$ is nonempty, the empirical risk minimization procedure is said to be \textit{consistent} if the following convergence holds in probability: $$\displaystyle \lim_{N\rightarrow +\infty}\mathcal{E}_{h,\rho}(f_N)= \underset{f\in B}{\inf}\mathcal{E}_{h,\rho}(f)$$ 
for any sequence of empirical risk minimizers $$ f_N\in\displaystyle\underset{f\in \mathcal{M}(\Omega,B)}{\text{argmin }}\mathcal{E}_{h,{\Sigma}_N}^*(f).$$ A sufficient condition is that $(f_N)_N$ converges almost surely to a minimizer of $\mathcal{E}_h$ in $H$, in which case the procedure will be said to be \textit{strongly consistent}. 
\begin{cor}
Provided that $f^*_{\Sigma_N}\neq 0$ a.s. for all $N\in\mathbb{N}$ and that $P^{L^2(\mathcal{X},\beta)}_{H}f_m\neq 0$, the minimization procedure of $\mathcal{E}_{1,\rho}:B\rightarrow\mathbb{R}^+$ is strongly consistent.
\end{cor}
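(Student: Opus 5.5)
The plan is to show directly that the canonical empirical minimizers converge almost surely to a population minimizer, which by definition gives strong consistency. By Corollary \ref{cor:riskdensity}, equation \eqref{eq:globalmin}, under the hypothesis $f^*_{\Sigma_N}\neq 0$ a.s. the empirical risk $\mathcal{E}^*_{1,\Sigma_N(\omega)}$ on $B$ has the unique minimizer
\[
f_N:=\frac{f^*_{\Sigma_N}}{\left\|f^*_{\Sigma_N}\right\|_{L^2(\mathcal{X},\beta)}} .
\]
In particular the argmin is nonempty, and any sequence of empirical minimizers in $\mathcal{M}(\Omega,B)$ coincides a.s. with $(f_N)_N$. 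Likewise, by \eqref{eq:empiricalepsilonone}, the element
\[
f_\infty:=\frac{P^{L^2(\mathcal{X},\beta)}_H f_m}{\left\|P^{L^2(\mathcal{X},\beta)}_H f_m\right\|_{L^2(\mathcal{X},\beta)}}
\]
is a minimizer of $\mathcal{E}_{1,\rho}$ on $B$, and it is well defined because $P^{L^2(\mathcal{X},\beta)}_Hf_m\neq 0$.

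Next I apply Proposition \ref{pro:estimator2}, equation \eqref{eq:empiricalConv}. It gives $f^*_{\Sigma_N}\to P^{L^2(\mathcal{X},\beta)}_H f_m$ almost surely in $H$, hence in $L^2(\mathcal{X},\beta)$ norm. The map $\phi:v\mapsto v/\|v\|$ is continuous on $H\setminus\{0\}$, and the limit is nonzero. So on the full-measure event where the convergence holds and every $f^*_{\Sigma_N}\neq 0$ (a countable intersection of full-measure events), continuity gives $f_N=\phi(f^*_{\Sigma_N})\to\phi(P^{L^2(\mathcal{X},\beta)}_Hf_m)=f_\infty$. This is exactly almost sure convergence of the empirical minimizers to a minimizer of $\mathcal{E}_{1,\rho}$ in $H$, which is the definition of strong consistency.

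To connect this with the consistency notion in the Vapnik sense, I will also use the Lipschitz bound from \eqref{eq:reformulation} and Cauchy--Schwarz:
\[
\left|\mathcal{E}_{1,\rho}(f_N)-\mathcal{E}_{1,\rho}(f_\infty)\right|=\left|\langle f_N-f_\infty,f_m\rangle\right|\le \|f_N-f_\infty\|\,\|f_m\|.
\]
This shows $\mathcal{E}_{1,\rho}(f_N)\to\inf_B\mathcal{E}_{1,\rho}$ almost surely, hence in probability.

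The only delicate point is measure-theoretic bookkeeping. The almost sure convergence in Definition \ref{def:almostsure} is stated on $\Omega^{\mathbb N}$ with $\mu^{\otimes\infty}$, so the events ``$f^*_{\Sigma_N}\neq0$'' must be read on the same product space before intersecting them. Measurability of $f_N$ as a $B$-valued random element follows because it is a continuous function of the measurable $f^*_{\Sigma_N}$ on the set where that is nonzero. I expect no real obstacle beyond this.
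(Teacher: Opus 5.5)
Your proposal is correct and follows the paper's own argument. Both use the almost sure convergence of $f^*_{\Sigma_N}$ to $P^{L^2(\mathcal{X},\beta)}_H f_m$ from \eqref{eq:empiricalConv}, continuity of the normalization map at the nonzero limit, and Corollary \ref{cor:riskdensity} to identify the two normalized quantities as the empirical and population minimizers. Your extra Cauchy--Schwarz bound $|\mathcal{E}_{1,\rho}(f_N)-\mathcal{E}_{1,\rho}(f_\infty)|\le\|f_N-f_\infty\|\,\|f_m\|$ only makes explicit why this implies consistency in Vapnik's sense, a step the paper takes for granted.
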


\begin{proof}
$(f^*_{\Sigma_N})_N$ converges almost surely to $P^{L^2(\mathcal{X},\beta)}_{H}f_m$, in the sense of Definition \ref{def:almostsure}, by \eqref{eq:empiricalConv}. Since $\|\cdot\|_{L^2(\mathcal{X},\beta)}$ is the norm of the RKHS $H$, continuity of the normalization map yields
$$\displaystyle \frac{f^*_{\Sigma_N}}{\left\|f^*_{\Sigma_N}\right\|_{L^2(\mathcal{X},\beta)}}\underset{a.s.}{\rightarrow}  \frac{P^{L^2(\mathcal{X},\beta)}_{H}f_m}{\left\|P^{L^2(\mathcal{X},\beta)}_{H}f_m\right\|_{L^2(\mathcal{X},\beta)}}.$$

By Corollary \ref{cor:riskdensity}, the right-hand side is the unique minimizer of the population risk over $B$, while the left-hand side is, almost surely, the unique minimizer of the empirical risk. Hence the above criterion for strong consistency applies.\\

\end{proof}

\subsection{Geometric interpretation}\label{lift}

In this subsection, we show how the canonical embedding of $\mathcal X$ into $H_n$ allows a decision boundary in $\mathcal X$ to be interpreted as a hyperplane in $H_n$. This viewpoint provides a geometric interpretation of the minimization of the cost function $\mathcal E_{1,S_N}$ over $B$ as an optimization problem involving distances to this hyperplane.\\
For every $n$, the reproducing kernel defines the canonical feature map
\[
i_n:X\longrightarrow H_n,\qquad
i_n(x)=K_n(x,\cdot).
\]
For $h\in H_n$, let
\[
h^\perp:=\{u\in L^2(\mathcal X,\beta):\langle h,u\rangle_{L^2(\mathcal{X},\beta)}=0\},
\]
and define
\[
h^{\perp,n}:=h^\perp\cap H_n.
\]
By the reproducing property, $\displaystyle h(x)=\langle h, i_n(x)\rangle_{L^2(\mathcal{X},\beta)},$ so
\[
\{x\in X:h(x)=0\}
=
i_n^{-1}\!\left(h^\perp\cap i_n(X)\right).
\]
Hence, the decision boundary of $h$ is the trace of the hyperplane $h^\perp$ on $i_n(\mathcal{X})$. \\
The following proposition shows that this geometric distance depends only on the function $h$, and not on the ambient RKHS containing it.

\begin{pro}\label{pro:distance}
Let $0\neq h\in H_{n_0}$ for some $n_0\in \mathbb{N}$.
For every $n\geq n_0$, the distance in $H_n$ from the point $i_n(x)$ to the hyperplane
$h^{\perp,n}$ is
\begin{equation}\label{eq:distanceHn}
d_{H_n}\left(i_n(x),h^{\perp,n}\right)
=
\frac{|h(x)|}{\left\|h\right\|_{L^2(\mathcal{X},\beta)}}.
\end{equation}
In particular, this distance does not depend on the ambient RKHS $H_n$. Moreover, the distance in $L^2(\mathcal{X},\beta)$ from the point $i_n(x)$ to the hyperplane
$h^{\perp}\subset L^2(\mathcal{X},\beta)$ is \begin{equation}\label{eq:distanceH}\displaystyle d_{L^2(\mathcal{X},\beta)}\left(i_n(x),h^\perp\right)= d_{H_n}\left(i_n(x),h^{\perp}\cap H_n\right).\end{equation}

\end{pro}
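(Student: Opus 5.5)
The plan is to use the elementary fact that, in a Hilbert space $E$, the distance from a point $v$ to the hyperplane $u^\perp\cap E$ (with $0\neq u\in E$) is $|\langle u,v\rangle_E|/\|u\|_E$. This distance is attained at the orthogonal projection $v-\frac{\langle u,v\rangle}{\|u\|^2}u$.

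First I apply this with $E=H_n$, $u=h$ and $v=i_n(x)=K_n(x,\cdot)$. This is legitimate because $h\in H_{n_0}\subset H_n$ for $n\geq n_0$, so $h\in H_n$ and $h^{\perp,n}$ is the hyperplane of $H_n$ orthogonal to $h$. The inner product of $H_n$ is the one induced by $L^2(\mathcal X,\beta)$, and $K_n$ is the reproducing kernel of $H_n$. Hence the reproducing property gives $\langle h,K_n(x,\cdot)\rangle_{L^2(\mathcal X,\beta)}=h(x)$, which yields \eqref{eq:distanceHn}. The right-hand side involves only $h$ and $x$, so it is independent of $n\geq n_0$. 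Equivalently, by Theorem \ref{dirac} 1), $P^{H_n}_{H_{n_0}}K_n(x,\cdot)=K_{n_0}(x,\cdot)$, so the component of $i_n(x)$ along $h$ is the same for all $n$.

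For \eqref{eq:distanceH}, I apply the same fact with $E=L^2(\mathcal X,\beta)$, again with $u=h$ and $v=i_n(x)$. This gives $d_{L^2}(i_n(x),h^\perp)=|\langle h,K_n(x,\cdot)\rangle|/\|h\|=|h(x)|/\|h\|$, which coincides with \eqref{eq:distanceHn}. Alternatively, I can argue directly. The inclusion $h^\perp\cap H_n\subset h^\perp$ gives the inequality $\leq$. For the reverse inequality, the closest point $i_n(x)-\frac{h(x)}{\|h\|^2}h$ already lies in $H_n$, because both $i_n(x)$ and $h$ belong to $H_n$.

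I do not expect a real obstacle. The only point needing care is checking the hypotheses. We need $h\in H_n$, so that $h^{\perp,n}$ is a genuine closed hyperplane of $H_n$ and the projection formula stays inside $H_n$; this is why $n\geq n_0$ is required. We also need to invoke the reproducing property of $K_n$ for the $L^2$ inner product, which holds by the definition of a nested RKHS.
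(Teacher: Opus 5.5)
Your proposal is correct and follows essentially the same route as the paper. Both compute the distance via the unit normal $h/\|h\|_{L^2(\mathcal X,\beta)}$ together with the reproducing property $\langle K_n(x,\cdot),h\rangle = h(x)$ for $h\in H_{n_0}\subset H_n$. For \eqref{eq:distanceH}, your argument that the closest point $i_n(x)-\frac{h(x)}{\|h\|^2}h$ already lies in $H_n$ is exactly the paper's observation that, since $\mathrm{span}(h)\subset H_n$, orthogonal projection onto $h^\perp$ maps $H_n$ into itself.
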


\begin{proof} 
The unit normal vector to $h^{\perp,n}$ in $H_n$ is $\displaystyle u=\frac{h}{\left\|h\right\|_{L^2(\mathcal{X},\beta)}}.$ Hence
\begin{equation}\label{eq:geometric}
d_{H_n}(i_n(x),h^{\perp,n})
=
\left|
\left\langle
i_n(x),u
\right\rangle_{H_n}
\right|
=
\frac{
|\langle i_n(x),h\rangle_{H_n}|
}{
\left\|h\right\|_{L^2(\mathcal{X},\beta)}
}.
\end{equation}
The reproducing property yields $\displaystyle \langle i_n(x),h\rangle_{H_n}=h(x)$ which proves \eqref{eq:distanceHn}. Since $\displaystyle L^2(\mathcal{X},\beta)=h^\perp\oplus \mathrm{span}(h)$, and $\displaystyle \mathrm{span}(h)\subset H_n$, the orthogonal projection of every point of $H_n$ onto $h^\perp$ still belongs to $H_n$. Therefore \eqref{eq:distanceH} follows.
\end{proof}


For binary classification, the sign of $h(x)$ indicates on which side of the hyperplane $h^\perp$ the point $i_n(x)$ lies. Therefore, $\mathcal{E}_{1,S_N}^*(h)$, for $h\in H_{n_0}\cap B$, admits the following geometric interpretation: by \eqref{eq:geometric}, $$1-\displaystyle\mathcal{E}_{1,S_N}^*(h)=\frac{1}{N}\sum_{(x_i,y_i)\in S_N}y_i\cdot \mathrm{sign}(h(x_i))\cdot d_{L^2(\mathcal{X},\beta)}(i_n(x_i), h^\perp).$$
By Proposition \ref{pro:distance}, this quantity is independent of $n$ for every $n\geq n_0$. The factor $y_i\cdot \mathrm{sign}(h(x_i))$ takes values in $\{-1,+1\}$: it equals $+1$ if $x_i$ is correctly classified by $h$, and $-1$ otherwise. Consequently, minimizing $\mathcal{E}_{1,S_N}^*(h)$ on $B$ is equivalent to maximizing the total distance from the training points in $S_N$ to the hyperplane $h^\perp$, with positive contributions from correctly classified points and negative contributions from misclassified ones. Hence the optimal hyperplane $f_n^{*\perp}$ is precisely the one maximizing this total signed distance.

.







\subsection{Minimization of $\mathcal{E}_{1,\rho}$ in $L^2(\mathcal{X},\beta)$.}\label{sub:minL2}

In this subsection, we prove the existence of a function $(N,\omega)\mapsto n_N(\omega)$ from $\mathbb N\times\Omega$ to $\mathbb N$, increasing in its first argument, such that $f^*_{n_N,\Sigma_N}\to f_m$ a.s. as $N\to+\infty$.
This is done in Subsubsection \ref{sec:premiminary}, where we prove the existence of a minimizing sequence for the risk functional $\mathcal E{1,\rho}$ in $L^2(\mathcal X,\beta)$. The consistency of this minimization procedure is then studied. In Subsubsection \ref{sec:consistence}, we provide criteria for estimating the terms of this sequence in a machine-learning context.\\

\noindent Let $(\Omega, \mathcal{T}, \mu)$ be a probability space, and $(H, H_n,K_n)$ a nested RKHS.

\subsubsection{Existence of a minimizing sequence}\label{sec:premiminary}

Fix $N\in\mathbb N$. A deterministic sample $S_N\in (\mathcal{X}\times\mathcal{Y})^N$ being given, define
\[
f^*_{n,S_N}:=
\frac1N\sum_{i=1}^N y_i K_n(x_i,\cdot).
\]

\begin{lem}\label{lem:oneminimum}
The function $\displaystyle n\mapsto \left\|f^*_{n,S_N}\right\|_{L^2(\mathcal{X},\beta)}$ is non-decreasing. 

\end{lem}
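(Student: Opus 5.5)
The approach is to show that $f^*_{n,S_N}$ is the orthogonal projection of $f^*_{n+1,S_N}$ onto $H_n$ (more generally, of $f^*_{k,S_N}$ onto $H_n$ whenever $n\leq k$). Monotonicity of the norm then follows at once, since an orthogonal projection never increases the norm.

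The key tool is Theorem \ref{dirac} 1). Fix $n\leq k$. For every $x\in\mathcal X$ it gives $P^{H_k}_{H_n}K_k(x,\cdot)=K_n(x,\cdot)$. The projection $P^{H_k}_{H_n}$ is linear, so applying it to the finite sum defining $f^*_{k,S_N}$ yields
\[
P^{H_k}_{H_n} f^*_{k,S_N}=\frac1N\sum_{i=1}^N y_i\, P^{H_k}_{H_n}K_k(x_i,\cdot)=\frac1N\sum_{i=1}^N y_i K_n(x_i,\cdot)=f^*_{n,S_N}.
\]
Alternatively, the same identity can be obtained directly from \eqref{eq:perp}. Since $f^*_{n,S_N}\in H_n$ and each difference $K_k(x_i,\cdot)-K_n(x_i,\cdot)$ is orthogonal to every $K_n(y,\cdot)$, the difference $f^*_{k,S_N}-f^*_{n,S_N}$ is orthogonal to a spanning family of $H_n$. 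Hence, by Theorem \ref{dirac} 0), it lies in $H_n^\perp$.

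From the orthogonal decomposition $f^*_{k,S_N}=f^*_{n,S_N}+(f^*_{k,S_N}-f^*_{n,S_N})$ and Pythagoras' theorem in $L^2(\mathcal X,\beta)$ (recall that the norm of each $H_m$ is the induced $L^2$ norm), we get
\[
\left\|f^*_{k,S_N}\right\|^2_{L^2(\mathcal{X},\beta)}=\left\|f^*_{n,S_N}\right\|^2_{L^2(\mathcal{X},\beta)}+\left\|f^*_{k,S_N}-f^*_{n,S_N}\right\|^2_{L^2(\mathcal{X},\beta)}\geq \left\|f^*_{n,S_N}\right\|^2_{L^2(\mathcal{X},\beta)}.
\]
This proves that the map is non-decreasing.

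A second, equivalent route goes through Theorem \ref{evaluation}. There $\|f^*_{n,S_N}\|$ is the supremum of $\left|\frac1N\sum_i y_i f(x_i)\right|/\|f\|$ over $f\in H_n\setminus\{0\}$. Because $H_n\subset H_{n+1}$, the supremum over $H_{n+1}$ is over a larger set and is therefore at least as large, which gives monotonicity. One caveat is that Theorem \ref{evaluation} assumes $g\neq 0$, so the case $f^*_{n,S_N}=0$ has to be treated separately. In that case the inequality is trivial.

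I do not expect a real obstacle. The only point requiring care is to justify that all norms involved are the same ambient $L^2(\mathcal X,\beta)$ norm, so that the comparison between different levels $n$ is meaningful. This holds because every $H_n$ carries the induced inner product.
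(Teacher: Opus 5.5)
Your argument is correct and is essentially the paper's. The paper notes that $f^*_{n-1,S_N}\perp f^*_{n,S_N}-f^*_{n-1,S_N}$ follows from \eqref{eq:perp} and then applies Pythagoras, which is exactly your direct route; your version via Theorem \ref{dirac} 1) repackages the same orthogonality. Your alternative via Theorem \ref{evaluation} is the one the paper gives in the remark after the lemma, and your separate handling of $f^*_{n,S_N}=0$ is a small point that remark omits.
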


\begin{proof}

It suffices to note that $f^*_{n-1,{S_N}}\perp f^*_{n,{S_N}}-f^*_{n-1,{S_N}}$ in $H$, that is consequence of \eqref{eq:perp}. Thus  \begin{equation}\label{eq:firstpartweak}
\left\|f^*_{n,{S_N}}\right\|_{L^2(\mathcal{X},\beta)}^2-\left\|f^*_{n-1,{S_N}}\right\|_{L^2(\mathcal{X},\beta)}^2=\left\|f^*_{n,{S_N}}-f^*_{n-1,{S_N}}\right\|_{L^2(\mathcal{X},\beta)}^2\geq 0.
\end{equation}

\end{proof}

\begin{rmq}
The statement of Lemma \ref{lem:oneminimum} can also be deduced from Theorem \ref{evaluation}. Indeed,

$$
\displaystyle
\left\|f^*_{n,S_N}\right\|_{L^2(\mathcal{X},\beta)}
=
\frac{1}{N}
\sup_{f\in H_n,\|f\|_\beta=1}
\left|\sum_{i=1}^N y_i f(x_i)\right|.
$$

Since $(H_n)_n$ is nested, the right-hand side is non-decreasing in $n$.
\end{rmq}

Let $S\in ({\mathcal{X}\times \mathcal{Y}})^\mathbb{N}$. Define $\displaystyle \mathcal{E}_0:H\rightarrow \mathbb{R}$ by $\displaystyle \mathcal{E}_0(f):=\|f- f_m\|$. For $\epsilon>0$, we can define $$\displaystyle n_{\epsilon,S_N}:=\min\left\{{n\in\mathbb{N}}:\mathcal{E}_0(f^*_{n,S_N})<\inf_{k\in\mathbb{N}}\mathcal{E}_0(f^*_{k,S_N})+\epsilon\right\}.$$
This quantity is well defined since the set in the definition is nonempty by construction. This choice is measurable with respect to $S_N$, since the infimum is taken over a countable set and the minimum is taken over $\mathbb N$. We denote by $n_{\epsilon,\Sigma_N}$ the corresponding random variable $
n_{\epsilon,\Sigma_N}(\omega):=n_{\epsilon,\Sigma_N(\omega)}.$

\begin{thm}\label{thm:unifcrmconv}
Set

$$
n_N:=n_{1/N,\Sigma_N}.
$$
Then, in the sense of Definition \ref{def:almostsure},
\begin{equation}\label{eq:unifcrmconv}
\lim_{N\rightarrow+\infty}f^*_{n_N,\Sigma_N}
\underset{a.s.}{=}
f_m.
\end{equation}

\noindent More precisely, there exists $E\subset\Omega^{\mathbb N}$ with
$\mu^\infty(E)=1$ such that, for every $\omega\in E$,

$$
\lim_{N\to+\infty}
f^*_{n_N(\omega),\Sigma_N(\omega)}
\underset{L^2(\mathcal X,\beta)}{=}
f_m.
$$

\end{thm}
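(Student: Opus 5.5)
The plan is a diagonal argument. We combine, on a single full-measure event, the countably many almost sure convergences of Proposition \ref{pro:def_fn} with the approximation property of the nested family (Theorem \ref{dirac} 4), assuming here $H=L^2(\mathcal X,\beta)$, or at least $f_m\in H$).

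First, for each fixed $n\in\mathbb N$, Proposition \ref{pro:def_fn} gives an event $E_n\subset\Omega^{\mathbb N}$ with $\mu^{\infty}(E_n)=1$ on which $f^*_{n,\Sigma_N(\omega)}\to P^{L^2(\mathcal X,\beta)}_{H_n}f_m$ in $L^2(\mathcal X,\beta)$ as $N\to+\infty$. Set $E:=\bigcap_{n\in\mathbb N}E_n$. Since this is a countable intersection, $\mu^\infty(E)=1$. Fix $\omega\in E$ from now on; the rest of the argument is deterministic.

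Second, I show that $\inf_{k}\mathcal E_0(f^*_{k,\Sigma_N(\omega)})\to 0$ as $N\to+\infty$. Fix $\eta>0$. By Theorem \ref{dirac} 4), choose $k_\eta$ with $\|P^{L^2}_{H_{k_\eta}}f_m-f_m\|<\eta/2$. Since $\omega\in E_{k_\eta}$, there is $N_\eta$ such that for $N\ge N_\eta$ we have $\|f^*_{k_\eta,\Sigma_N(\omega)}-P^{L^2}_{H_{k_\eta}}f_m\|<\eta/2$. Hence for all $N\ge N_\eta$,
\[
\inf_k\mathcal E_0\bigl(f^*_{k,\Sigma_N(\omega)}\bigr)\le\mathcal E_0\bigl(f^*_{k_\eta,\Sigma_N(\omega)}\bigr)<\eta .
\]
Third, by the definition of $n_N=n_{1/N,\Sigma_N}$,
\[
\bigl\|f^*_{n_N(\omega),\Sigma_N(\omega)}-f_m\bigr\|<\inf_k\mathcal E_0\bigl(f^*_{k,\Sigma_N(\omega)}\bigr)+\frac1N .
\]
The right-hand side tends to $0$ by the second step, which proves \eqref{eq:unifcrmconv} on $E$. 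Only the existence of a minimizing index is used here, which holds by construction, so no monotonicity of $n_N$ is needed.

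The main obstacle is not analytic. It is the measure-theoretic bookkeeping: the statement requires a single event $E$ independent of $n$, whereas Proposition \ref{pro:def_fn} only gives a null set for each $n$ separately. The countable intersection resolves this, but only because the index set $\mathbb N$ is countable and Definition \ref{def:almostsure} uses the same product measure $\mu^{\otimes\infty}$ for every $n$.

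We also need $n_N$ to be measurable so that it defines a random variable. This follows as remarked before the statement: $n_N$ is defined by an infimum over countably many measurable quantities and a minimum over $\mathbb N$.

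A subtle point is that $\mathcal E_0$ involves the unknown $f_m$. This affects only the practical computation of $n_N$, which is deferred to Subsubsection \ref{sec:consistence}, and not the validity of the convergence proved here.
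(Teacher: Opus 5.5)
Your proof is correct and follows the same route as the paper. You approximate $f_m$ by $P^{L^2(\mathcal X,\beta)}_{H_{n_1}}f_m$ at a fixed level, use the almost sure convergence of $f^*_{n_1,\Sigma_N}$ at that level, and bound $\mathcal E_0(f^*_{n_N,\Sigma_N})$ by $\mathcal E_0(f^*_{n_1,\Sigma_N})+1/N$. Your choice $E:=\bigcap_{n}E_n$ is in fact the cleaner bookkeeping: in the paper's write-up the full-measure set is produced for an $n_1$ depending on $\epsilon$, so the closing step ``$\epsilon$ arbitrary'' tacitly requires exactly the countable intersection you make explicit.
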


\begin{proof}

Fix $\epsilon>0$. Since $(H_n)_n$ is dense in $L^2(\mathcal X,\beta)$, there exists $n_1\in\mathbb N$ such that

$$
\left\|f_m-P^{L^2(\mathcal X,\beta)}_{H_{n_1}}f_m\right\|_{L^2(\mathcal{X},\beta)}<\epsilon.
$$

By Proposition \ref{pro:estimator2}, there exists a set
$E\subset\Omega^{\mathbb N}$ with $\mu^\infty(E)=1$ such that, for every
$\omega\in E$,

$$
f^*_{n_1,\Sigma_N(\omega)}
\underset{L^2(\mathcal X,\beta)}{\longrightarrow}
P^{L^2(\mathcal X,\beta)}_{H_{n_1}}f_m.
$$

Hence, for every $\omega\in E$,

$$
\mathcal E_0\left(f^*_{n_1,\Sigma_N(\omega)}\right)
\longrightarrow
\left\|P^{L^2(\mathcal X,\beta)}_{H_{n_1}}f_m-f_m\right\|_{L^2(\mathcal{X},\beta)}
<\epsilon.
$$

By the definition of $n_N=n_{1/N,\Sigma_N}$,

$$
\mathcal E_0\left(f^*_{n_N,\Sigma_N}\right)
\leq
\inf_{n\in\mathbb N}\mathcal E_0(f^*_{n,\Sigma_N})+\frac1N
\leq
\mathcal E_0\left(f^*_{n_1,\Sigma_N}\right)+\frac1N.
$$

Therefore, for every $\omega\in E$,

$$
\limsup_{N\to+\infty}
\mathcal E_0\left(f^*_{n_N(\omega),\Sigma_N(\omega)}\right)
\leq\epsilon.
$$

Since $\epsilon>0$ is arbitrary,

$$
\lim_{N\to+\infty}
\mathcal E_0\left(f^*_{n_N(\omega),\Sigma_N(\omega)}\right)=0.
$$

Thus $\displaystyle f^*_{n_N,\Sigma_N}
\underset{L^2(\mathcal X,\beta)}{\longrightarrow}
f_m
$ a.s., which proves the result.
\end{proof}

We also define a less restrictive notion of consistency for the case where there is not guarantee that the minimum of the empirical risk functional is reached. The risk functional $\mathcal{E}_h$ will be said {\it{weakly consistent}} if there exists a sequence $(f_N)\in H^\mathbb{N}$ such that $$\displaystyle\left|\mathcal{E}_h^*(f_N)-\underset{f\in H}{\inf}\mathcal{E}_h^*(f)\right|\rightarrow 0$$ and $\mathcal{E}_h(f_N)\rightarrow \underset{f\in H}{\inf}\mathcal{E}_h(f)$ when $N\rightarrow +\infty$. For that, it suffices that $(f_N)_N$ converges almost surely towards a minimizer of $\mathcal{E}_h$ in $H$, in which case we will say that the risk functional $\mathcal{E}_h$ is \textit{convergent}.

\begin{cor}
The minimization problem of $\mathcal{E}_{1,\rho}$ is convergent.
\end{cor}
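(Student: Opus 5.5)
The plan is to read the corollary, as the surrounding definitions suggest, as a statement about the minimization of $\mathcal{E}_{1,\rho}$ over the unit sphere $B$ of the ambient space $H=L^2(\mathcal{X},\beta)$. The goal is to exhibit a sequence of random elements $f_N\in\mathcal{M}(\Omega,B)$, built from the sample, which converges almost surely (in the sense of Definition \ref{def:almostsure}) to a minimizer of $\mathcal{E}_{1,\rho}$ on $B$. The natural candidate is the normalized diagonal estimator
$$
f_N:=\frac{f^*_{n_N,\Sigma_N}}{\left\|f^*_{n_N,\Sigma_N}\right\|_{L^2(\mathcal{X},\beta)}},
$$
with $n_N=n_{1/N,\Sigma_N}$ as in Theorem \ref{thm:unifcrmconv}. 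On the null event where the denominator vanishes, $f_N$ is set equal to an arbitrary fixed element of $B$. Throughout I assume $f_m\neq 0$, which is the analogue of the nondegeneracy hypotheses of the previous corollary; otherwise every element of $B$ is a minimizer and the statement is trivial.

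First I identify the minimizer. By \eqref{eq:reformulation}, for every $f\in B$ we have $\mathcal{E}_{1,\rho}(f)=1-\langle f,f_m\rangle_{L^2(\mathcal{X},\beta)}$. Cauchy--Schwarz then gives $\mathcal{E}_{1,\rho}(f)\geq 1-\|f_m\|$, with equality exactly when $f=f_m/\|f_m\|$. This is the same argument as in Corollary \ref{cor:riskdensity}, now with $H=L^2(\mathcal{X},\beta)$, so that $P_H f_m=f_m$. Hence $g:=f_m/\|f_m\|$ is the unique minimizer on $B$.

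Next I apply Theorem \ref{thm:unifcrmconv}. It provides a set $E\subset\Omega^{\mathbb N}$ with $\mu^\infty(E)=1$ on which $f^*_{n_N(\omega),\Sigma_N(\omega)}\to f_m$ in $L^2(\mathcal{X},\beta)$. Since $f_m\neq 0$, for each $\omega\in E$ the norms $\|f^*_{n_N(\omega),\Sigma_N(\omega)}\|$ are eventually bounded below by $\|f_m\|/2>0$. The arbitrary choice made on zero denominators therefore affects only finitely many terms. The map $v\mapsto v/\|v\|$ is continuous on $L^2\setminus\{0\}$, so $f_N(\omega)\to g$ for every $\omega\in E$. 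Moreover $|\mathcal{E}_{1,\rho}(f_N)-\mathcal{E}_{1,\rho}(g)|\le\|f_N-g\|\,\|f_m\|$, which gives $\mathcal{E}_{1,\rho}(f_N)\to\inf_B\mathcal{E}_{1,\rho}$ almost surely. For the empirical side, Corollary \ref{cor:riskdensity} applied in the RKHS $H_{n_N}$ shows that $f_N$ is the exact empirical minimizer over $B\cap H_{n_N}$.

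The main obstacle is the meaning of the empirical-infimum condition in the definition of weak consistency when the infimum is taken over the whole of $B\subset L^2(\mathcal{X},\beta)$. Point evaluations are not continuous on $L^2$, so $\inf_{B}\mathcal{E}^*_{1,S_N}$ may not be attained, or may even equal $-\infty$. This failure of attainment is precisely why the paper introduces the weaker notion. I would handle it by noting that the paper states that almost sure convergence of $(f_N)$ to a minimizer suffices for convergence, and then resting the proof on the two steps above. I would also add the remark that, at each level $n_N$, $f_N$ realizes the empirical minimum over $B\cap H_{n_N}$ by Corollary \ref{cor:riskdensity}.
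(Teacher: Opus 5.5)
Your proof is correct and follows the paper's argument. Both apply Theorem \ref{thm:unifcrmconv}, normalize the diagonal estimator $f^*_{n_N,\Sigma_N}$, and identify $f_m/\|f_m\|_{L^2(\mathcal{X},\beta)}$ as the unique minimizer on $B$ via \eqref{eq:reformulation} and Cauchy--Schwarz. Your handling of vanishing denominators differs slightly and is cleaner than the paper's fallback of dividing by $\|f_m\|_{L^2(\mathcal{X},\beta)}$: you use an arbitrary element of $B$, which on $E$ affects only finitely many $N$ since $f_m\neq 0$, so it still works even though that event need not be null for fixed $N$.
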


\begin{proof}
 By Theorem \ref{thm:unifcrmconv}, $$\displaystyle \frac{f^*_{n_N,\Sigma_N(\omega)}}{\left\|f^*_{n_N,\Sigma_N(\omega)}\right\|_{L^2(\mathcal{X},\beta)}}\underset{N\rightarrow +\infty}{\rightarrow} \frac{f_m}{\left\|f_m\right\|_{L^2(\mathcal{X},\beta)}}$$ in $L^2(\mathcal{X},\beta)$ for a.e. $\omega\in \Omega^\mathbb{N}$, so the minimization problem of $\mathcal{E}_{1,\rho}$ is strongly consistent by \ref{eq:globalmin}, provided that $f^*_{n_N,\Sigma_N}(\omega)\neq 0$ for all $N\in \mathbb{N}$ and almost $\omega\in \Omega^\infty$. In the general case, the convergence $$\displaystyle \frac{f^*_{n_N,\Sigma_N(\omega)}}{\left\|f_m\right\|_{L^2(\mathcal{X},\beta)}}\underset{N\rightarrow +\infty}{\rightarrow} \frac{f_m}{\left\|f_m\right\|_{L^2(\mathcal{X},\beta)}}$$ prove that the minimization problem of $\mathcal{E}_{1,\Sigma_N}$ is convergent.
\end{proof}

\subsubsection{Estimation of the minimizing sequence}\label{sec:consistence}

In the previous subsection, we have established the existence of a function $(n,\omega)\mapsto n_{N}(\omega)$ from $\mathbb{N}\times \Omega$ to $\mathbb{N},$ that allows to construct a sequence $\displaystyle \left(f_{n_N,\Sigma_N}\right)_N$ of good approximates of $f_m$. In this subsection, we give two theorems providing some numerical criteria allowing to estimate this function.

\begin{thm}
 For almost every $\omega\in\Omega^\mathbb N$ and every $\epsilon>0$, there exists $N_0\in\mathbb N$ such that, for every $N\geq N_0$, there exists a function $h_{N,\omega}:\mathbb N\to\mathbb R_+$, decreasing on $\{0,\ldots,n_N(\omega)\}$ and increasing on $\{n_N(\omega),\ldots\}$, with $h_{N,\omega}(n_N(\omega))=0$, such that, for every $n\in\mathbb N$, \[ \left| \mathcal{E}_0\left(f^*_{n,\Sigma_N(\omega)}\right)-h_{N,\omega}(n) \right| \leq\epsilon\]

\end{thm}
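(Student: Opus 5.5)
The plan is to exploit an orthogonal splitting of the error $\mathcal E_0(f^*_{n,S_N})=\|f^*_{n,S_N}-f_m\|$. It breaks into a part that is non-decreasing in $n$ and a part that is non-increasing in $n$; both monotone envelopes are then controlled using Theorem \ref{thm:unifcrmconv}. Throughout, "decreasing/increasing" is understood in the weak sense, as in Lemma \ref{lem:oneminimum}, and $\|\cdot\|$ denotes the norm of $L^2(\mathcal X,\beta)$. We write $P_n:=P^{L^2(\mathcal X,\beta)}_{H_n}$.

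\textbf{Step 1 (splitting).} Fix a deterministic sample $S_N$. We have $f^*_{n,S_N}-P_nf_m\in H_n$ and $f_m-P_nf_m\perp H_n$, so
\[
\mathcal E_0(f^*_{n,S_N})^2=a_N(n)+b(n),\qquad a_N(n):=\|f^*_{n,S_N}-P_nf_m\|^2,\quad b(n):=\|f_m-P_nf_m\|^2 .
\]
Since the $H_n$ increase, $b$ is non-increasing. For $a_N$, take $k\ge n$. By Theorem \ref{dirac} 1), $P^{H_k}_{H_n}K_k(x,\cdot)=K_n(x,\cdot)$, so $P^{H_k}_{H_n}f^*_{k,S_N}=f^*_{n,S_N}$. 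Also $P^{H_k}_{H_n}P_kf_m=P_nf_m$. Hence $f^*_{n,S_N}-P_nf_m=P^{H_k}_{H_n}(f^*_{k,S_N}-P_kf_m)$, and therefore $a_N(n)\le a_N(k)$. So $a_N$ is non-decreasing; this is the same mechanism as in Lemma \ref{lem:oneminimum}.

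\textbf{Step 2 (choice of $N_0$).} Let $E$ be the full-measure set of Theorem \ref{thm:unifcrmconv}, and fix $\omega\in E$ and $\epsilon>0$. By that theorem there is $N_0$ such that, for $N\ge N_0$, $\delta:=\mathcal E_0(f^*_{n_N(\omega),\Sigma_N(\omega)})\le\epsilon$. From now on write $e(n):=\mathcal E_0(f^*_{n,\Sigma_N(\omega)})$ and $n_N:=n_N(\omega)$. By Step 1, $a_N(n_N)\le\delta^2$ and $b(n_N)\le\delta^2$.

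\textbf{Step 3 (construction of $h_{N,\omega}$).} For $n<n_N$ set $h(n):=\min_{0\le k\le n}e(k)$; set $h(n_N):=0$; and for $n>n_N$ set $h(n):=\sqrt{a_N(n)}$.

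Monotonicity: $h$ is non-increasing on $\{0,\dots,n_N\}$ because a running minimum is non-increasing and the value $0$ at $n_N$ lies below everything. It is non-decreasing on $\{n_N,\dots\}$ by Step 1, again because $h(n_N)=0$.

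Error bound, case $n\le n_N$: take any $k\le n$. Then $e(n)^2=a_N(n)+b(n)\le a_N(n_N)+b(k)\le\delta^2+e(k)^2$, so $e(n)\le e(k)+\delta$. This gives $h(n)\le e(n)\le h(n)+\delta$. At $n=n_N$ itself, $|e(n_N)-0|=\delta$.

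Error bound, case $n>n_N$: $a_N(n)\le e(n)^2\le a_N(n)+b(n_N)\le a_N(n)+\delta^2$. Hence $0\le e(n)-h(n)\le\delta$.

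Together, $|e(n)-h(n)|\le\delta\le\epsilon$ for every $n$, which is the claim.

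The main obstacle is Step 1, specifically the monotonicity of $a_N$. It requires checking carefully that the projection compatibility of Theorem \ref{dirac} 1) transfers to the empirical sums $f^*_{n,S_N}$ and to $P_nf_m$ simultaneously. Once the "increasing plus decreasing" decomposition is available, the envelope construction in Step 3 is elementary.
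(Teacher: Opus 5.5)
Your proof is correct, but it takes a different route from the paper's.

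\textbf{The paper's route.} The paper uses $f^*_{n_N(\omega),\Sigma_N(\omega)}$ as a pivot and sets
$h_{N,\omega}(n):=\|f^*_{n,\Sigma_N(\omega)}-f^*_{n_N(\omega),\Sigma_N(\omega)}\|$. The reverse triangle inequality then gives
$|\mathcal E_0(f^*_{n,\Sigma_N(\omega)})-h_{N,\omega}(n)|\le\|f^*_{n_N(\omega),\Sigma_N(\omega)}-f_m\|$, and this is small by Theorem \ref{thm:unifcrmconv}. Unimodality follows because $h_{N,\omega}^2(n)$ is a sum of squared norms of increments $f^*_{j+1,\Sigma_N(\omega)}-f^*_{j,\Sigma_N(\omega)}$, for $j$ between $n$ and $n_N(\omega)$. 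These increments are mutually orthogonal by \eqref{eq:perp}.

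\textbf{Your route.} You instead use the Pythagorean splitting $\mathcal E_0^2=a_N+b$. Here $a_N$ is a non-decreasing estimation term and $b$ a non-increasing approximation term. You then build $h$ as a monotone envelope: a running minimum before $n_N(\omega)$ and $\sqrt{a_N}$ after it.

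\textbf{What each approach buys.}
\begin{itemize}
\item The paper's argument is shorter.
\item The paper's $h$ is expressed only through the empirical estimators $f^*_{n,\Sigma_N(\omega)}$; once $n_N(\omega)$ is given, $f_m$ is not needed. This matches the stated aim of that subsection, namely numerical criteria for locating $n_N$.
\item Your $h$ depends on $f_m$ through $\mathcal E_0(f^*_{k,\Sigma_N(\omega)})$ and $P_nf_m$, so it is purely existential.
\item In return, your argument makes explicit the approximation/estimation (bias--variance) structure behind the near-unimodality of the error curve.
\item You also get a sharper one-sided sandwich, $h\le\mathcal E_0(f^*_{n,\Sigma_N(\omega)})\le h+\delta$, instead of a two-sided bound.
\end{itemize}
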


\begin{proof}

Fix $\epsilon>0$. By Theorem \ref{thm:unifcrmconv}, for almost every $\omega\in\Omega^\mathbb{N}$, there exists $N_0=N_0(\omega,\epsilon)\in\mathbb{N}$ such that,  for all $n\in\mathbb{N}$, $\|f^*_{n_N(\omega),\Sigma_N(\omega)}-f_m\|<\epsilon$ for $N\geq N_0$. Fix a such  $\omega\in\Omega^\mathbb{N}$. We have the triangular inequality:

$$\displaystyle \left| \left\| f^*_{n,\Sigma_N(\omega)}- f_m \right\|_{L^2(\mathcal{X},\beta)}- \left\| f^*_{n,\Sigma_N(\omega)}- f^*_{n_N(\omega),\Sigma_N(\omega)}\right\|_{L^2(\mathcal{X},\beta)}\right| \leq \left\|f^*_{n_N(\omega),\Sigma_N(\omega)}-f_m\right\|_{L^2(\mathcal{X},\beta)}.$$

Hence, for all $N\geq N_0$ and all $n\in\mathbb{N},$

$$\displaystyle \left\| f^*_{n,\Sigma_N(\omega)}- f^*_{n_N(\omega),\Sigma_N(\omega)}\right\|_{L^2(\mathcal{X},\beta)}-\epsilon\leq \left\| f^*_{n,\Sigma_N(\omega)}- f_m \right\|_{L^2(\mathcal{X},\beta)}\leq \left\| f^*_{n,\Sigma_N(\omega)}- f^*_{n_N(\omega),\Sigma_N(\omega)}\right\|_{L^2(\mathcal{X},\beta)}+\epsilon.$$

On the other hand, denoting $$\displaystyle h_{N,\omega}:n\mapsto \left\| f^*_{n,\Sigma_N(\omega)}- f^*_{n_N(\omega),\Sigma_N(\omega)}\right\|_{L^2(\mathcal{X},\beta)},$$ we have $h_{N,\omega}(n)=0$ for $n=n_N(\omega)$, and else we compute:

\begin{equation}\displaystyle h_{N,\omega}^2(n)=\frac{1}{N^2}\left\|\sum_{i=1}^N\left(Y_i(\omega_i) K_n(X_i(\omega_i),\cdot)-Y_i(\omega_i) K_{n_N(\omega)}(X_i(\omega_i),\cdot)\right)\right\|_{L^2(\mathcal{X},\beta)}^2\end{equation}

\begin{equation}\label{onemin}\displaystyle =\frac{1}{N^2}\sum_{j=\min(n,n_N(\omega))}^{\max(n,n_N(\omega))-1}\left\|\sum_{i=1}^N\left(Y_i(\omega_i) K_j(X_i(\omega_i),\cdot)-Y_i(\omega_i) K_{j+1}(X_i(\omega_i),\cdot)\right)\right\|_{L^2(\mathcal{X},\beta)}^2.\end{equation} 

We have used the fact, justified by \(\eqref{eq:perp}\), that the terms in the telescopic sum are orthogonal. It follows that $\displaystyle n\mapsto h_{N,\omega}(n)$ is decreasing on $\{0,\ldots,n_N(\omega)\}$ and increasing on $\{n_N(\omega),\ldots\}$.

\end{proof}

\begin{thm}
 For almost every $\omega\in\Omega^\mathbb N$ and every $\epsilon>0$, there exists $N_0\in\mathbb N$ such that, for every $N\geq N_0$, there exists a function $h_{N,\omega}:\mathbb N\to\mathbb R^+$, decreasing on $\{0,\ldots,n_N(\omega)\}$ and increasing on $\{n_N(\omega),\ldots\}$, with $h_{N,\omega}(n_N(\omega))=0$, such that, for every $n\in\mathbb N$ such that $f_{n,\Sigma_N(\omega)}^*\neq 0$,
$$\displaystyle\left|\mathcal{E}_{1,\rho}\left(\frac{{f_{n,\Sigma_N(\omega)}^*}}{\left\|{f_{n,\Sigma_N(\omega)}^*}\right \|_{L^2(\mathcal{X},\beta)}}\right)-h_{N,\omega}(n)\right|\leq \epsilon.$$

\end{thm}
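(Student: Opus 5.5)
The plan mirrors the preceding theorem, with the $L^2$ distance replaced by the geometry of the unit sphere $B$. For $n$ with $f^*_{n,\Sigma_N(\omega)}\neq 0$, write $u_n:=f^*_{n,\Sigma_N(\omega)}/\|f^*_{n,\Sigma_N(\omega)}\|_{L^2(\mathcal X,\beta)}$ and $v:=f_m/\|f_m\|_{L^2(\mathcal X,\beta)}$. I assume $f_m\neq0$ and work on the full-measure set $E$ of Theorem \ref{thm:unifcrmconv}.

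\textbf{Step 1: reduce to distances between unit vectors.} By \eqref{eq:reformulation} and the identity $1-\langle a,b\rangle=\tfrac12\|a-b\|^2$ for unit vectors,
\[
\mathcal E_{1,\rho}(u_n)=1-\|f_m\|\langle u_n,v\rangle=(1-\|f_m\|)+\tfrac{\|f_m\|}{2}\|u_n-v\|^2 .
\]

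\textbf{Step 2: replace $v$ by $u_{n_N(\omega)}$.} By Theorem \ref{thm:unifcrmconv} and continuity of normalization away from $0$, $u_{n_N(\omega)}\to v$. So for $N\geq N_0$ we have $\|u_{n_N(\omega)}-v\|$ small. The triangle inequality, together with $\|u_n-v\|\le 2$, then gives
\[
\big|\|u_n-v\|^2-\|u_n-u_{n_N(\omega)}\|^2\big|\le 4\|u_{n_N(\omega)}-v\|,
\]
uniformly in $n$.

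\textbf{Step 3: define $h$ and check it is unimodal.} Set $h_{N,\omega}(n):=\tfrac{\|f_m\|}{2}\|u_n-u_{n_N(\omega)}\|^2$; it vanishes at $n=n_N(\omega)$. Expanding gives $\|u_n-u_{n_N}\|^2=2-2\langle f^*_n,f^*_{n_N}\rangle/(\|f^*_n\|\,\|f^*_{n_N}\|)$.
\begin{itemize}
\item If $n\le n_N$, the increments are orthogonal by \eqref{eq:perp}, exactly as in \eqref{onemin}. Hence $\langle f^*_n,f^*_{n_N}\rangle=\|f^*_n\|^2$, and the ratio equals $\|f^*_n\|/\|f^*_{n_N}\|$.
\item If $n\ge n_N$, the same argument gives the ratio $\|f^*_{n_N}\|/\|f^*_n\|$.
\end{itemize}
By Lemma \ref{lem:oneminimum} the norms $\|f^*_n\|$ are non-decreasing in $n$. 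Therefore $h_{N,\omega}$ is non-increasing on $\{0,\dots,n_N\}$ and non-decreasing beyond it, which is the required shape. The indices $n$ with $f^*_n=0$ are excluded by the statement; they form an initial segment, since the norms are monotone.

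\textbf{Main obstacle: the constant $1-\|f_m\|$.} Steps 1–3 give $|\mathcal E_{1,\rho}(u_n)-(1-\|f_m\|)-h_{N,\omega}(n)|\le\epsilon$. To reach the statement as worded, with $h_{N,\omega}(n_N)=0$, the offset $1-\|f_m\|$ must itself be at most of order $\epsilon$. Evaluating at $n=n_N$ shows this is also necessary, because there $\mathcal E_{1,\rho}(u_{n_N})\to1-\|f_m\|$.

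I would first try to justify it from the normalization implicit in the risk setting, namely that $f_m$ is taken with unit norm as the minimizer over $B$ in Corollary \ref{cor:riskdensity}. If the normalization $\|f_m\|_{L^2(\mathcal X,\beta)}=1$ is available, the three steps prove the statement directly. Without it, the bound holds only up to the additive constant $1-\|f_m\|$, and this discrepancy is exactly where the argument cannot be closed.
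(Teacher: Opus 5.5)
Your Steps 1--3 are essentially the paper's proof. The paper sets $g_{N,\omega}(n):=\langle u_n,u_{n_N(\omega)}\rangle$ and evaluates it as $\|f^*_{n,\Sigma_N(\omega)}\|/\|f^*_{n_N(\omega),\Sigma_N(\omega)}\|$ for $n\le n_N(\omega)$, and as the inverse ratio for $n\ge n_N(\omega)$, using \eqref{eq:perp} and Lemma \ref{lem:oneminimum}. It then takes $h_{N,\omega}:=1-g_{N,\omega}$ and bounds $|g_{N,\omega}(n)-\langle u_n,f_m\rangle|$ by $\|u_{n_N(\omega)}-f_m\|$ via Cauchy--Schwarz. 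Your $h_{N,\omega}=\tfrac{\|f_m\|}{2}\|u_n-u_{n_N(\omega)}\|^2=\|f_m\|\,(1-g_{N,\omega}(n))$ is the same function up to the factor $\|f_m\|$, and your Step 2 is the same comparison. In fact $\|u_n-v\|^2-\|u_n-u_{n_N(\omega)}\|^2=2\langle u_n,u_{n_N(\omega)}-v\rangle$ exactly, so the error is $\|f_m\|\,\|u_{n_N(\omega)}-v\|$ without your factor $4$.

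The obstacle you isolate is genuine, and it is a defect of the statement, not of your argument. The paper gets past it by asserting that Theorem \ref{thm:unifcrmconv} gives $\|f_m-u_{n_N(\omega)}\|\le\epsilon$ for $N\ge N_0$. But that theorem only gives $f^*_{n_N,\Sigma_N}\to f_m$, hence $u_{n_N(\omega)}\to f_m/\|f_m\|$, as the paper itself writes in the corollary that follows it. So the assertion holds only if $\|f_m\|_{L^2(\mathcal X,\beta)}=1$, and your evaluation at $n=n_N(\omega)$ shows that the statement as worded fails whenever $\|f_m\|\neq 1$.

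Do not look for that normalization in Corollary \ref{cor:riskdensity}: it normalizes $P^{L^2(\mathcal X,\beta)}_Hf_m$, whereas $f_m=\mu(Y=1)f_{X|Y=1}-\mu(Y=-1)f_{X|Y=-1}$ is fixed by $\rho$ and $\beta$. For example, take $X$ uniform on $[0,1]$, $\beta$ Lebesgue, and $Y$ independent of $X$ with $\mu(Y=1)=0.9$. Then $f_m\equiv 0.8$ and $\mathcal E_{1,\rho}(u_{n_N(\omega)})\to 0.2$, contradicting the required bound at $n=n_N(\omega)$ for $\epsilon<0.2$.

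The right conclusion is therefore one of two things. Either state the theorem under the added hypothesis $\|f_m\|=1$, which also makes your assumption $f_m\neq 0$ automatic and under which your proof is complete. Or keep your corrected bound with the offset $1-\|f_m\|$.

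A few small loose ends, which the paper also skips:
\begin{itemize}
\item $h_{N,\omega}$ must be defined on all of $\mathbb N$, so use the ratio formula (value $\|f_m\|$) at the initial indices where $f^*_{n,\Sigma_N(\omega)}=0$.
\item $f^*_{n_N(\omega),\Sigma_N(\omega)}\neq 0$ for large $N$, because it converges to $f_m\neq 0$.
\item The monotonicity of $h_{N,\omega}$ is only in the weak sense, which is all Lemma \ref{lem:oneminimum} provides.
\end{itemize}
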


\begin{proof}
Fix $\epsilon>0$. By Theorem \ref{thm:unifcrmconv}, for a.e. $\omega\in\Omega^\mathbb{N}$, there exists $N_0=N_0(\omega,\epsilon)\in \mathbb{N}$ such that $\|f_m-\frac{f_{n_N(\omega)}^*}{\|f_{n_N(\omega)}^*\|}\|\leq \epsilon$ for $N\geq N_0$. Fix such an $\omega\in\Omega^\mathbb N$.

We compute for $n\leq n_N(\omega)$: $$\displaystyle \displaystyle\left\langle \frac{f_{n,\Sigma_N(\omega)}^*}{\left\|f_{n,\Sigma_N(\omega)}^*\right\|_{L^2(\mathcal{X},\beta)}}, \frac{f_{n_N(\omega),\Sigma_N(\omega)}^*}{\left\|f_{n_N(\omega),\Sigma_N(\omega)}^*\right\|_{L^2(\mathcal{X},\beta)}}\right\rangle =\frac{\left\|f^*_{n,\Sigma_N(\omega)}\right\|_{L^2(\mathcal{X},\beta)}}{\left\|f^*_{n_N(\omega),\Sigma_N(\omega)}\right\|_{L^2(\mathcal{X},\beta)}}.$$
For $n\geq n_N(\omega)$, we obtain in the same way $$\displaystyle \left\langle \frac{f_{n,\Sigma_N(\omega)}^*}{\left\|f_{n,\Sigma_N(\omega)}^*\right\|_{L^2(\mathcal{X},\beta)}}, \frac{f_{n_N(\omega),\Sigma_N(\omega)}^*}{\left\|f_{n_N(\omega),\Sigma_N(\omega)}^*\right\|_{L^2(\mathcal{X},\beta)}}\right\rangle =\frac{\left\|f^*_{n_N(\omega),\Sigma_N(\omega)}\right\|_{L^2(\mathcal{X},\beta)}}{\left\|f^*_{n,\Sigma_N(\omega)}\right\|_{L^2(\mathcal{X},\beta)}}.$$ 
Then, Lemma \ref{lem:oneminimum} implies that $$g_{N,\omega}:n\mapsto \left\langle \frac{f_{n,\Sigma_N(\omega)}^*}{\left\|f_{n,\Sigma_N(\omega)}^*\right\|_{L^2(\mathcal{X},\beta)}}, \frac{f_{n_N(\omega),\Sigma_N(\omega)}^*}{\left\|f_{n_N(\omega),\Sigma_N(\omega)}^*\right\|_{L^2(\mathcal{X},\beta)}}\right\rangle$$ is increasing up to $1$ for $n\leq n_N(\omega)$ and then decreasing. By Cauchy-Schwarz, for $N\geq N_0$, $|g_{N,\omega}(n)-v_{N,\omega}(n)|\leq \epsilon$ for every $n\in \mathbb N$ such that $f_{n,\Sigma_N(\omega)}^*\neq 0$, where $$\displaystyle v_{N,\omega}:n\mapsto \left\langle \frac{f_{n,\Sigma_N(\omega)}^*}{\left\|f_{n,\Sigma_N(\omega)}^*\right\|_{L^2(\mathcal{X},\beta)}}, f_m\right\rangle.$$ From \eqref{eq:reformulation} we have $$v_{N,\omega}(n)=\displaystyle1-\mathcal{E}_{1,\rho}\left(\frac{f_{n,\Sigma_N(\omega)}^*}{\left\|f_{n,\Sigma_N(\omega)}^*\right\|_{L^2(\mathcal{X},\beta)}}\right),$$ So $h_{N,\omega}:=1-g_{N,\omega}$ satisfies the statement.\\

\end{proof}


\subsection{Probabilistic interpretation}\label{sub:probabilistic}

In this subsection, still in the setting of binary classification, we consider the probability of misclassification as a risk functional to be minimized. We show how the functions $f^*_{n_N(\omega),\Sigma_N(\omega)}$ provide a solution to this minimization problem.\\

As in Subsection \ref{subsec:statistical}, we consider $X,Y:\Omega\rightarrow\mathcal{X}\times \{-1,1\}$. We need the following notion. The {\it{regression function}} $f_\rho:\mathcal{X}\rightarrow [-1,1]$ is defined as a representative of the conditional expectation $f_\rho(x) := \mathbb{E}(Y|X=x)$. We emphasize the following relation between $f_\rho$ and $f_m$:

\begin{pro}\label{density}We have $\beta$-a.e. \begin{equation}\label{eq:tempered_difference}f_{\rho}\cdot f_X=P(Y=1)f_{X|Y=1}-P(Y=-1)f_{X|Y=-1}=f_m\end{equation}.\end{pro}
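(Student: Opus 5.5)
The natural route is to identify both sides as densities, with respect to $\beta$, of the same signed measure on $\mathcal X$, namely $\nu(A):=E_\mu\bigl(Y\mathds{1}_{\{X\in A\}}\bigr)$. The second equality in \eqref{eq:tempered_difference} is just the definition of $f_m$. The paper has already noted in Subsection \ref{sec:framework} that $f_m$ is the $\beta$-density of $\nu$. It therefore suffices to show that $f_\rho f_X$ is also a $\beta$-density of $\nu$, and then to conclude by uniqueness of Radon--Nikodym derivatives, which gives equality $\beta$-a.e.

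\textbf{Step 1: $f_m$ is the density of $\nu$.} For a Borel set $A\subset\mathcal X$, split according to $Y=\pm1$:
$$
E_\mu\bigl(Y\mathds{1}_{\{X\in A\}}\bigr)=\mu(Y=1)\,\mu(X\in A\mid Y=1)-\mu(Y=-1)\,\mu(X\in A\mid Y=-1).
$$
Using the conditional densities $f_{X|Y=\pm1}$, this equals $\int_A f_m\,d\beta$. When $\mu(Y=\pm1)=0$, the corresponding term is simply absent.

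\textbf{Step 2: $f_\rho f_X$ is the density of $\nu$.} By the defining property of the conditional expectation, $f_\rho(X)=E(Y\mid X)$ a.s. for a measurable $f_\rho$. Since $\mathds{1}_{\{X\in A\}}$ is $\sigma(X)$-measurable,
$$
E_\mu\bigl(Y\mathds{1}_{\{X\in A\}}\bigr)=E_\mu\bigl(f_\rho(X)\mathds{1}_{\{X\in A\}}\bigr)=\int_A f_\rho\,d\rho_X=\int_A f_\rho f_X\,d\beta,
$$
where the last equality uses $\rho_X=f_X\,d\beta$. Integrability is automatic because $|f_\rho|\le1$ and $f_X\in L^1(\beta)$.

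\textbf{Step 3: conclusion.} Both $f_m$ and $f_\rho f_X$ are $\beta$-integrable and have equal integrals over every Borel set $A$. They are therefore equal $\beta$-a.e.; one can see this by taking $A=\{f_m>f_\rho f_X\}$ and then $A=\{f_m<f_\rho f_X\}$.

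The only point requiring care is the choice of representative. $f_\rho$ is defined only $\rho_X$-a.e., not $\beta$-a.e. On the set $\{f_X=0\}$ its value is arbitrary, but there the product $f_\rho f_X$ vanishes anyway. It remains to check that $f_m=0$ $\beta$-a.e. on $\{f_X=0\}$; this follows from $|f_m|\le f_X$, as noted in Subsection \ref{sec:framework}. The statement is thus independent of the chosen version of $f_\rho$. I expect no real obstacle beyond this bookkeeping.
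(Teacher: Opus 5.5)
Your proof is correct, but it works at the level of measures, whereas the paper's proof is pointwise. The paper writes down, for $\beta$-a.e.\ $x$ with $f_X(x)>0$, the elementary Bayes formula $E(Y\mid X=x)=\bigl(\mu(Y=1)f_{X|Y=1}(x)-\mu(Y=-1)f_{X|Y=-1}(x)\bigr)/f_X(x)$ and reads off $f_m/f_X$. Alternatively, it invokes the coupling $Y_X$ of Proposition \ref{lem:alternative}.

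The paper's route is short and intuitive, but it has two weaknesses:
\begin{itemize}
\item The pointwise formula for a conditional expectation is used without justification, and it is essentially equivalent to the statement itself.
\item The argument only covers $\{f_X>0\}$, so reaching a $\beta$-a.e.\ identity implicitly needs $|f_m|\le f_X$.
\end{itemize}

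You instead identify $f_m$ and $f_\rho f_X$ as $\beta$-densities of the same signed measure $A\mapsto E_\mu\bigl(Y\mathds{1}_{\{X\in A\}}\bigr)$. This uses only the defining property of conditional expectation and uniqueness of Radon--Nikodym derivatives. The argument is self-contained, it actually justifies the formula the paper takes for granted, and it gives equality $\beta$-a.e.\ on all of $\mathcal X$ at once.

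Your closing bookkeeping remark is therefore redundant rather than necessary. Step 3 already forces $f_m=f_\rho f_X=0$ $\beta$-a.e.\ on $\{f_X=0\}$. Independence from the version of $f_\rho$ is also automatic: two versions differ only on a $\rho_X$-null set, and $f_X=0$ $\beta$-a.e.\ on such a set.
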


\begin{proof}
For $\beta$-almost every $x\in\mathcal X$ such that $f_X(x)>0$, we have

$$
\displaystyle
f_\rho(x)
=\mathbb E(Y\mid X=x)
=\frac{\mu(Y=1)f_{X\mid Y=1}(x)-\mu(Y=-1)f_{X\mid Y=-1}(x)}
{f_X(x)}
=\frac{f_m(x)}{f_X(x)}.
$$

Alternatively, Proposition \ref{lem:alternative} gives, for $\beta$-almost every $x\in\mathcal X$ such that $f_X(x)>0$,

$$
f_\rho(x)=\mathbb E(Y\mid X=x)=\mathbb E(Y_X\mid X=x)=\frac{f_m(x)}{f_X(x)}.
$$

\end{proof}

For the sake of simplicity, given two random variables $Z_1,Z_2\in\mathcal{M}(\Omega, \mathbb{R})$, we denote by $$\displaystyle P(Z_1=Z_2):=\mu\left(\left\{\omega\in\Omega: Z_1(\omega)=Z_2(\omega)\right\}\right),$$ and $P(Z_1\neq Z_2):=1-P(Z_1=Z_2)$, and we use the same notation for conditional probabilities. Define $g:\mathbb{R}^2\rightarrow\mathbb{R}$ by $g(x,y):=\mathbb{I}_{\mathbb{R}^-}(xy)$. For all $f:\mathcal{X}\rightarrow\{-1,1\}$, we can express, using the notation of \eqref{eq:riskfunctional}:

\begin{equation}\label{classif}\displaystyle \mathcal{E}_{g,\rho}(f)=\int_{\mathcal{X}\times\mathcal{Y}}\mathbb{I}_{\mathbb{R}^-}(y f(x))d\rho=P(Y\neq f(X)).\end{equation}

\begin{pro}
For $Y'\in\mathcal{M}(\Omega, \{-1,1\},\sigma(X))$, the quantity $P(Y=Y')$ is maximized by $Y'=\operatorname{sign} f_m(X).$\\
Moreover, let $(X,Y):\Omega\rightarrow \mathcal{X}\times\{-1,1\}$ be a random variable independent of each random variable of the random sample $\Sigma_N=((X_0,Y_0),(X_1,Y_1),\ldots)$ but with the same law. For $\omega\in \Omega^\mathbb{N}$, let $Y_{n,N}:=sign f^*_{n,\Sigma_N(\omega)}(X).$ Then, for a.e. $\omega\in\Omega^\mathbb{N}$,

$$\displaystyle P(Y=Y_{n_N(\omega),N})-\sup_{n\in\mathbb{N}}P(Y=Y_{n,N})\underset{N\rightarrow +\infty}{\rightarrow} 0.$$ 
\end{pro}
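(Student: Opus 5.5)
The plan is to reduce everything to the identity
\[
P(Y=Y')=\tfrac12+\tfrac12\,E_\mu(YY'),
\]
which holds for every $\{-1,1\}$-valued $Y'$, and then to express $E_\mu(YY')$ through $f_m$. For the first claim, let $Y'$ be $\sigma(X)$-measurable. By the Doob--Dynkin lemma, $Y'=\varphi(X)$ for some measurable $\varphi:\mathcal X\to\{-1,1\}$. Conditioning on $X$ and using Proposition \ref{density}, $f_\rho f_X=f_m$ $\beta$-a.e., gives
\[
E_\mu(Y\varphi(X))=E_\mu\bigl(\varphi(X)f_\rho(X)\bigr)=\int_{\mathcal X}\varphi\, f_\rho f_X\,d\beta=\int_{\mathcal X}\varphi\, f_m\,d\beta\le\int_{\mathcal X}|f_m|\,d\beta .
\]
Equality holds if and only if $\varphi=\operatorname{sign} f_m$ on $\{f_m\neq0\}$. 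Hence $Y'=\operatorname{sign} f_m(X)$ is a maximizer, with value $P^*:=\tfrac12+\tfrac12\int|f_m|\,d\beta$. Equivalently, one can read this off \eqref{classif} with the loss $g$.

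For the second claim, fix $\omega$ and set $g:=f^*_{n,\Sigma_N(\omega)}$, which is a deterministic function once $\omega$ is fixed. Since $(X,Y)$ is independent of the sample and has law $\rho$, the same computation applied to $\varphi=\operatorname{sign} g$ gives
\[
P(Y=Y_{n,N})=\tfrac12+\tfrac12\int_{\mathcal X}\operatorname{sign}(g)\,f_m\,d\beta .
\]
This requires some care: $P$ must be understood as conditional on the sample, i.e.\ the integral over the copy $(X,Y)$ with $\omega$ frozen, which is justified by Fubini and independence. By the first part, every such probability is at most $P^*$. Consequently
\[
P(Y=Y_{n_N(\omega),N})\le\sup_{n}P(Y=Y_{n,N})\le P^*,
\]
and it suffices to show that $P(Y=Y_{n_N(\omega),N})\to P^*$ for a.e.\ $\omega$.

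The key estimate, which I expect to be the only real obstacle, is a stability bound for every $g\in L^2(\mathcal X,\beta)$:
\[
0\le\int|f_m|\,d\beta-\int\operatorname{sign}(g)f_m\,d\beta\le 2\int_{\{\operatorname{sign} g\neq\operatorname{sign} f_m,\ f_m\neq0\}}|f_m|\,d\beta\le 2\|g-f_m\|_{L^1(\beta)}\le 2\|g-f_m\|_{L^2(\mathcal X,\beta)} .
\]
The first inequality follows from the first part. The third uses that on the disagreement set, where $f_m\neq0$ and $g$ either vanishes or has the opposite sign, we have $|f_m|\le|g-f_m|$; this pointwise check must be made for whatever convention is adopted for $\operatorname{sign}(0)$. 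The last inequality is Cauchy--Schwarz, valid because $\beta$ is a probability measure.

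Applying this bound with $g=f^*_{n_N(\omega),\Sigma_N(\omega)}$ and invoking Theorem \ref{thm:unifcrmconv}, which gives $\|f^*_{n_N(\omega),\Sigma_N(\omega)}-f_m\|_{L^2(\mathcal X,\beta)}\to0$ for $\omega$ in a set $E$ of full $\mu^{\otimes\infty}$-measure, yields $P(Y=Y_{n_N(\omega),N})\to P^*$ for every $\omega\in E$. Combined with the sandwich $P(Y=Y_{n_N(\omega),N})\le\sup_n P(Y=Y_{n,N})\le P^*$, this shows
\[
P(Y=Y_{n_N(\omega),N})-\sup_{n\in\mathbb N}P(Y=Y_{n,N})\longrightarrow 0 \quad\text{as } N\to+\infty,
\]
for every $\omega\in E$. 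As a by-product, the sequence attains the Bayes-optimal accuracy asymptotically.
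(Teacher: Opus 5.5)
Your proof is correct. The first part is the paper's argument: Doob--Dynkin, then $P(Y=Y'\mid X=x)=\tfrac12(1+f_\rho(x)\varphi(x))$, maximized by $\varphi=\operatorname{sign} f_m$. For the second part you take a different route.

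Write $\hat f_n:=f^*_{n,\Sigma_N(\omega)}$ and $\delta_N:=\|f_m-\hat f_{n_N(\omega)}\|_{L^2(\mathcal X,\beta)}$. The paper does not use the Bayes value $P^*$ there. It splits
$P(Y=Y_{n,N})=\tfrac12+\tfrac12\langle \hat f_{n_N(\omega)},\operatorname{sign}\hat f_n\rangle+\tfrac12\langle f_m-\hat f_{n_N(\omega)},\operatorname{sign}\hat f_n\rangle$.
It bounds the last term by $\tfrac12\delta_N$ uniformly in $n$, by Cauchy--Schwarz with $\|\operatorname{sign}\hat f_n\|_{L^2}\le 1$. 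It then notes that the proxy $n\mapsto\langle\hat f_{n_N(\omega)},\operatorname{sign}\hat f_n\rangle$ is maximal at $n=n_N(\omega)$. Together these give $0\le\sup_nP(Y=Y_{n,N})-P(Y=Y_{n_N(\omega),N})\le\delta_N$.

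That maximality is really the pointwise inequality $u\operatorname{sign}v\le|u|$, which is the same fact behind your stability bound. You center it at $f_m$ rather than at the empirical function $\hat f_{n_N(\omega)}$, and you use the first part as the ceiling. The resulting bound is the same: $\delta_N$, or in your case even the $L^1$ distance.

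What each route buys:
\begin{itemize}
\item Your version proves more. It gives $P(Y=Y_{n_N(\omega),N})\to P^*$, i.e.\ asymptotic Bayes optimality of the selected classifier, which the paper only records afterwards as a corollary of the same identity.
\item The paper's version does not need the first part. It also approximates the whole profile $n\mapsto P(Y=Y_{n,N})$ uniformly by the proxy.
\end{itemize}

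One small precision concerns your formula $P(Y=Y_{n,N})=\tfrac12+\tfrac12\int\operatorname{sign}(g)f_m\,d\beta$, which the paper also uses. It requires the convention $\operatorname{sign}(0)\in\{-1,1\}$, which the paper adopts. With $\operatorname{sign}(0)=0$ it would be off by $\tfrac12P(g(X)=0)$. So your remark about the convention belongs to this identity, not only to the pointwise check in the stability bound.
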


\begin{proof}

Let $Y'\in\mathcal{M}(\Omega, \{-1;1\},\sigma(X))$. We can write $Y':={g}(X)$ for some Borel function $g:\mathcal{X}\rightarrow \{-1,1\}$ by the Doob–Dynkin Lemma (see e.g. Proposition 3, Chapter 2 in \cite{rao2005}). Then $\beta$-a.e. we have

 $$\displaystyle P(Y=Y'|X=x)=P\left(\frac{YY'+1}{2}=1\mid X=x\right)=E\left(\frac{YY'+1}{2}|X=x\right)$$
$$\displaystyle =\frac{E(Y  (g(X))\mid X=x)+1}{2}=\frac{E(Y|X=x)(g(x))+1}{2}=\frac{f_\rho(x) (g(x))+1}{2}.$$
We see that this expression is maximal for $g=\operatorname{sign} f_\rho = \operatorname{sign} f_m$ on $f_X>0$. (We can adopt here the convention $sign(0)\in \{-1,1\}$.) Now let us prove the second statement.
$$\displaystyle P(Y=Y_{n,N})=\frac{E(YY_{n,N}+1)}{2}=\frac{E(E(YY_{n,N}+1|X))}{2}=\frac{E(f_\rho(X)\operatorname{sign} f^*_{n,\Sigma_N(\omega)}(X))+1}{2}$$
\begin{equation}\label{eq:encart}\displaystyle = \frac{\left\langle f_m,\operatorname{sign} f^*_{n,\Sigma_N(\omega)}\right\rangle + 1}{2} = \frac{\left\langle f^*_{n_N(\omega),\Sigma_N(\omega)},\operatorname{sign} f^*_{n,\Sigma_N(\omega)}\right\rangle}{2} + \frac{\left\langle f_m - f^*_{n_N(\omega),\Sigma_N(\omega)},\operatorname{sign} f^*_{n,\Sigma_N(\omega)}\right\rangle}{2} + \frac{1}{2}.\end{equation}
Fix $\epsilon>0$. Then for $N$ large by Theorem \ref{thm:unifcrmconv}, thanks to the Cauchy-Schwarz inequality,
$$\displaystyle \frac{\left\langle f^*_{n_N(\omega),\Sigma_N(\omega)},\operatorname{sign} f^*_{n,\Sigma_N(\omega)}\right\rangle + 1}{2} - \epsilon \leq P(Y=Y_{n,N})\leq \frac{\left\langle f^*_{n_N(\omega),\Sigma_N(\omega)},sign f^*_{n,\Sigma_N(\omega)}\right\rangle + 1}{2} + \epsilon.$$ 
The conclusion follows from the fact that the following quantity is maximal for $n=n_N(\omega)$, thanks to the Cauchy-Schwarz inequality: $$\displaystyle {\left\langle f^*_{n_N(\omega),\Sigma_N(\omega)},\operatorname{sign} f^*_{n,\Sigma_N(\omega)}\right\rangle }.$$

\end{proof}

\noindent From \eqref{eq:encart} we can give the following precision:

\begin{cor}
For a.e. $\omega\in\Omega^\mathbb{N}$, the sequence $\displaystyle\left(\operatorname{sign} f^*_{n_N(\omega),\Sigma_N(\omega)}\right)_N$ is a minimizing sequence for $\mathcal{E}_{g,\rho}$.
\end{cor}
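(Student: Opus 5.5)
The plan is to show that $\mathcal{E}_{g,\rho}\bigl(\operatorname{sign} f^*_{n_N(\omega),\Sigma_N(\omega)}\bigr)$ converges to the infimum of $\mathcal{E}_{g,\rho}$ over $\{-1,1\}$-valued Borel functions. By the preceding proposition, this infimum equals $\mathcal{E}_{g,\rho}(\operatorname{sign} f_m)$. By \eqref{classif}, $\mathcal{E}_{g,\rho}(f)=P(Y\neq f(X))=1-P(Y=f(X))$. The computation in \eqref{eq:encart} gives, for any Borel $s:\mathcal X\to\{-1,1\}$,
\[
P(Y=s(X))=\frac{\langle f_m,s\rangle_{L^2(\mathcal X,\beta)}+1}{2}.
\]
Here we use $E(f_\rho(X)s(X))=\int f_\rho s f_X\,d\beta=\langle f_m,s\rangle$, which follows from Proposition \ref{density}. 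Hence
\[
\mathcal{E}_{g,\rho}(s)-\mathcal{E}_{g,\rho}(\operatorname{sign} f_m)=\tfrac12\bigl(\langle f_m,\operatorname{sign} f_m\rangle-\langle f_m,s\rangle\bigr)=\tfrac12\int |f_m|\,(1-s\operatorname{sign} f_m)\,d\beta\ge 0,
\]
so it suffices to show that this gap tends to $0$ when $s=s_N:=\operatorname{sign} f^*_{n_N(\omega),\Sigma_N(\omega)}$.

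The key step is to write $u_N:=f^*_{n_N(\omega),\Sigma_N(\omega)}$ and use that $s_N u_N=|u_N|$. This gives
\[
\langle f_m,\operatorname{sign} f_m\rangle-\langle f_m,s_N\rangle
=\bigl(\|f_m\|_{L^1}-\|u_N\|_{L^1}\bigr)+\langle u_N-f_m,s_N\rangle .
\]
Since $|s_N|=1$ and $\beta$ is a probability measure, Cauchy--Schwarz bounds the second term by $\|u_N-f_m\|_{L^2(\mathcal X,\beta)}$. For the first term, $\bigl|\|f_m\|_{L^1}-\|u_N\|_{L^1}\bigr|\le\|u_N-f_m\|_{L^1}\le\|u_N-f_m\|_{L^2(\mathcal X,\beta)}$, again because $\beta(\mathcal X)=1$. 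The gap is therefore at most $\|u_N-f_m\|_{L^2(\mathcal X,\beta)}$. By Theorem \ref{thm:unifcrmconv}, this tends to $0$ for every $\omega$ in the full-measure set $E$, which proves the corollary.

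The main subtlety is the convention for $\operatorname{sign}(0)$. On $\{f_m=0\}$ the integrand $|f_m|(1-s\operatorname{sign} f_m)$ vanishes whatever value is chosen, and on $\{u_N=0\}$ the identity $s_Nu_N=|u_N|$ holds for any choice in $\{-1,1\}$. So I will fix any measurable convention and note that the argument is insensitive to it. I will also check that $s_N$ is Borel, which holds because $u_N$ is a finite combination of the kernel sections $K_{n}(x_i,\cdot)$. No rate is needed; the corollary follows directly from the $L^2$ convergence in Theorem \ref{thm:unifcrmconv} together with the inclusion $L^2\subset L^1$ on a probability space.
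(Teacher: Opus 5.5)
Your proof is correct and follows the paper's route. The paper obtains this corollary directly from the decomposition \eqref{eq:encart} taken at $n=n_N(\omega)$, bounding the cross term by Cauchy--Schwarz and invoking Theorem \ref{thm:unifcrmconv}, exactly as you do; your write-up also makes explicit a step the paper leaves implicit, namely that $\langle u_N,\operatorname{sign}u_N\rangle=\|u_N\|_{L^1}\to\|f_m\|_{L^1}=\langle f_m,\operatorname{sign}f_m\rangle$ because $L^2\subset L^1$ on a probability space.
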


\end{document}